\newtheorem{Theorem}{Theorem}[section]
\newtheorem{Proposition}[Theorem]{Proposition}
\newtheorem{Lemma}[Theorem]{Lemma}
\newtheorem{Corollary}[Theorem]{Corollary}
\newcommand{\f}{\ensuremath{\varphi}}
\newcommand{\ps}{\ensuremath{\psi}}
\newcommand{\x}{\ensuremath{\chi}}
\newcommand{\la}{\lambda}
\newcommand{\ga}{\gamma}
\newcommand{\ut}{1}
\newcommand{\nf}{0}
\newcommand{\m}{\mathbf} 
\newcommand{\N}{\mathbb{N}}
\newcommand{\si}{\sigma}
\newcommand{\De}{\Delta}
\newcommand{\Si}{\Sigma}
\newcommand{\lang}{{\mathcal L}}
\newcommand{\fm}{\mathrm{Fm}}
\newcommand{\fma}{\mathbf{Fm}}
\newcommand{\vty}[1]{\mathcal{#1}}
\newcommand{\op}[1]{\mathbb{#1}}
\newcommand{\lgc}[1]{\mathrm{#1}}
\newcommand{\mdl}[1]{\models_{\lgc{#1}}}
\newcommand{\der}[1]{\vdash_{\lgc{#1}}}\newcommand{\eq}{\approx}
\newcommand*{\InCRL}{\mathcal{I}{\rm n}\mathcal{CRL}}
\renewcommand{\iff}{\enspace \Longleftrightarrow \enspace }    
\newcommand{\defiff}{\enspace :\Longleftrightarrow \enspace }
\begin{document}
\begin{frontmatter}
\journal{ArXiv}
\title{Theorems of Alternatives for Substructural Logics}
\author[ac]{Almudena~Colacito}
\address[ac]{Mathematical Institute, University of Bern\\Sidlerstrasse 5, 3012 Bern, Switzerland}
\ead{almudena.colacito@math.unibe.ch}
\author[ng]{Nikolaos~Galatos}%\corref{cor}
\ead{ngalatos@du.edu}
\author[ac]{George~Metcalfe}%\corref{cor}
%\address[gm]{Mathematical Institute, University of Bern\\Sidlerstrasse 5, 3012 Bern, Switzerland}
\ead{george.metcalfe@math.unibe.ch}
%\address[gm]{Mathematical Institute, University of Bern\\Sidlerstrasse 5, 3012 Bern, Switzerland}
\address[ng]{Department of Mathematics, University of Denver\\2360 S. Gaylord St., Denver, CO 80208, USA}
%\cortext[cor]{Corresponding author}

\begin{abstract}A theorem of alternatives provides a reduction of validity in a substructural logic to validity in its  multiplicative fragment. Notable examples include a theorem of Arnon Avron that reduces the validity of a disjunction of multiplicative formulas in the ``R-mingle'' logic $\lgc{RM}$ to the validity of a linear combination of these formulas, and Gordan's theorem for solutions of linear systems over the real numbers, that yields an analogous reduction for validity in Abelian logic $\lgc{A}$. In this paper, general conditions are  provided for axiomatic extensions of involutive uninorm logic without additive constants to admit a theorem of alternatives. It is also shown that a theorem of alternatives for a logic can be used to establish (uniform) deductive interpolation and completeness with respect to a class of dense totally ordered residuated lattices.
\end{abstract}
%\begin{keyword}Partially ordered group \sep Total order \sep Right order \sep Free lattice-ordered group, Spectral space, Stone duality.

%\MSC[2010]{Primary: 06F15. Secondary: 06E15  \sep 03C05 \sep 08B15.}
%\end{keyword}

\end{frontmatter}
\section{Introduction}\label{sec:intro}

In~\cite{Avr87} Arnon Avron proved a remarkable theorem relating derivability in the ``R-mingle" logic $\lgc{RM}$ (see, e.g.,~\cite{Dun70,AB75,Avr86}) formulated with connectives $+$, $\lnot$, $\land$, and $\lor$, to derivability in its multiplicative fragment  with connectives $+$ and $\lnot$. More precisely, Avron proved that a disjunction of multiplicative formulas $\f_1 \lor\ldots\lor \f_n$  is derivable in $\lgc{RM}$ if and only if $\f_{j_1} + \ldots + \f_{j_k}$ is derivable in $\lgc{RM}$  for some $1 \le j_1 < \ldots  <  j_k \le n$. Indeed, two proofs are given of this result. The first is an easy consequence of a (quite hard) cut elimination proof for a proof system for $\lgc{RM}$ defined in the framework of hypersequents, introduced in the paper as sequences of sequents. The second proof is semantic and makes use of the completeness of  $\lgc{RM}$ and its multiplicative fragment with respect to an infinite-valued and a three-valued matrix, respectively.

The central aim of the first part of this paper is to show that Avron's theorem belongs to a family of results that may be understood as ``theorems of alternatives'' for substructural logics. Such theorems in the field of linear programming are duality principles  stating that either one or another linear system has a solution over the real numbers, but not both  (see, e.g.,~\cite{Dan63}). In particular,  Gordan's theorem (replacing real numbers with integers) asserts that for any $M \in \mathbb{Z}^{m \times n}$,
\begin{center}
{\em either} \ 
$y^T M > \mathbf{0}$ \,for some $y \in \mathbb{Z}^m$ \ {\em or} \ $Mx = \mathbf{0}$ \,for some $x \in \N^n {\setminus} \{\mathbf{0}\}$.
\end{center}

\noindent
This version of Gordan's theorem is proved in~\cite{CM17} by extending partial orders on free abelian groups to total orders and formulated as a correspondence between derivability in Abelian logic $\lgc{A}$  (see, e.g.,~\cite{MS89,Cas89,MOG05}) and derivability in its multiplicative fragment. That is, a disjunction of multiplicative formulas $\f_1 \lor\ldots\lor \f_n$  is derivable in $\lgc{A}$ if and only if  $\la_1 \f_1 +  \cdots + \la_n \f_n$ is derivable in $\lgc{A}$ for some $\la_1,\ldots,\la_n \in \N$ not all $0$. In Section~\ref{sec:alternatives}, we provide a sufficient condition for an axiomatic extension of involutive uninorm logic without additive constants $\lgc{IUL^-}$ (see~\cite{MM07}) to satisfy such an equivalence. The condition is based on derivability and determines a family of substructural logics admitting a theorem of alternatives that includes $\lgc{RM^t}$ ($\lgc{RM}$ with an additional truth constant), involutive uninorm mingle logic without additive constants $\lgc{IUML^-}$ (axiomatized relative to $\lgc{RM^t}$ by $\ut\to\nf$, see~\cite{MM07}), Abelian logic $\lgc{A}$, and the ``balanced" extension $\lgc{BIUL^-}$ of $\lgc{IUL^-}$ with  additional axioms $n\f \to \f^n$ and $\f^n \to n\f$ for each $n\in\N$.

The second part of the paper focuses on applications of  theorems of alternatives. In Section~\ref{sec:interpolation}, we show that if an extension of $\lgc{IUL^-}$ with a theorem of alternatives admits deductive interpolation or right uniform deductive interpolation (see, e.g.,~\cite{MMT14,vGMT17,KM19}) for its multiplicative fragment, then the full logic admits the property. For example, this provides an alternative proof that $\lgc{RM^t}$ admits deductive interpolation (and hence right uniform deductive interpolation), first proved in~\cite{Mey80} (see also~\cite{Avr86,MM12}). In Section~\ref{sec:density}, we show that any extension of $\lgc{IUL^-}$ that derives $\ut\to\nf$ and has a theorem of alternatives  is complete with respect to a class of dense totally ordered residuated lattices. Obtaining such a ``dense chain completeness" result is important in the field of mathematical fuzzy logic as a key intermediate step towards proving that an axiom system is ``standard complete'': that is, complete with respect to a class of algebras with lattice reduct $[0,1]$ (see, e.g.,~\cite{JM02,MM07,CM08,BT16,MT17,GH18}). Although theorems of alternatives hold only for a fairly narrow class of substructural logics, we obtain here new dense chain completeness proofs for $\lgc{IUML^-}$ and $\lgc{A}$, and a dense chain completeness result for $\lgc{BIUL^-}$ that does not seem to be easily proved using other methods developed in the literature.

%%%%%%%%%%%%%%%%%%%%%%%%%%%%%%%%%%%%%%%%%%%%%%%%%%%%%%%%%

\section{Preliminaries}\label{sec:prelim}

Let $\lang$ be any propositional language and let $\fm_\lang$ denote the set of formulas of this language over a fixed countably infinite set of variables, denoting arbitrary variables and formulas by $p,q,r,\dots$ and $\f,\ps,\x,\dots$, respectively. Given $\Si \subseteq \fm_\lang$, we let ${\rm Var}(\Si)$ denote the set of variables occurring in $\Si$, shortening ${\rm Var}(\{\f\})$ to  ${\rm Var}(\f)$. We also denote  the formula algebra of $\lang$ by $\fma_\lang$ and recall that a substitution for $\lang$ is a homomorphism $\si \colon \fma_\lang \to \fma_\lang$.

A {\em substitution-invariant consequence relation} over  $\lang$  is a set ${\der{\lgc{L}}} \mathrel{\subseteq}\mathcal{P}(\fm_\lang) \times \fm_\lang$ that satisfies the following conditions for all  $\Sigma \cup \Sigma' \cup\{\f\}\subseteq \fm_\lang$ (writing $\Sigma\der{\lgc{L}}\f$ to denote $\langle\Sigma,\f\rangle\in{\der{\lgc{L}}}$):
\begin{itemize}

\item[\text{(i)}]  if $\f\in \Sigma$, then $\Sigma\der{\lgc{L}} \f$\enspace \emph{(reflexivity)};

\item[\text{(ii)}] if $\Sigma\der{\lgc{L}} \f$ and $\Sigma\subseteq \Sigma'$, then $\Sigma'\der{\lgc{L}} \f$\enspace \emph{(monotonicity)};

\item[\text{(iii)}] if $\Sigma\der{\lgc{L}} \f$ and $\Sigma'\der{\lgc{L}} \ps$ for every $\ps\in \Sigma$, then $\Sigma'\der{\lgc{L}} \ps$\enspace \emph{(cut)};

\item[{\rm (iv)}]  if  $\Sigma\der{\lgc{L}} \f$, then $\sigma[\Sigma]\der{\lgc{L}} \sigma(\f)$ for any substitution $\si$ for $\lang$\enspace \emph{(substitution-invariance)}.

\end{itemize}
If also $\Sigma\der{\lgc{L}} \f$ implies $\Sigma'\der{\lgc{L}} \f$ for some finite $\Sigma'\subseteq\Sigma$, then ${\der{\lgc{L}}}$ is called {\em finitary}.

An ordered pair $\lgc{L} = \langle \lang,\der{\lgc{L}}\rangle$, where $\der{\lgc{L}}$ is a substitution-invariant consequence relation  over a propositional language $\lang$, is called a {\em logic} over $\lang$. We call another logic $\lgc{L}'$  an {\em extension} of $\lgc{L}$ if ${\der{\lgc{L}}}\subseteq{\der{\lgc{L}'}}$. Given $X \subseteq \fm_\lang$, we also call the smallest extension of $\lgc{L}$ that includes $X$ an {\em axiomatic extension} of $\lgc{L}$ and  denote it by $\lgc{L} \oplus X$.

Let us consider a propositional language $\lang_m$ with binary connectives $\to,\cdot$ and constants $\ut,\nf$,  defining $\lnot\f := \f\to\nf$,  $\f + \ps := \lnot \f \to \ps$, and, inductively,  $0\f := \nf$, $\f^0 := \ut$, $(n+1)\f = n\f + \f$, and $\f^{n+1} = \f^n \cdot \f$ for  $n \in \N$. We shorten $\fm_{\lang_m}$ to $\fm_m$ and call its members {\em multiplicative} formulas. {\em Multiplicative linear logic} $\lgc{MLL}$ over $\lang_m$ can be defined via derivability in the axiom system:
\[
\begin{array}{c}
\begin{array}{lcl}
(\f  \to \ps) \to ((\ps \to \x) \to (\f \to \x)) & & (\f \to (\ps \to \x)) \to ((\f \cdot \ps) \to \x)\\[.025in]
(\f \to (\ps \to \x)) \to (\ps \to (\f \to \x))  & &  \f \to (\ps \to (\f \cdot \ps)) \\[.025in]
\f \to \f 						  & & \f \to (\ut \to \f)\\[.025in]
\lnot\lnot\f \to \f					  & & \ut\\[.025in]
\end{array}\\
\infer[(\text{mp})]{\ps}{\f & \f \to \ps} 
\end{array}
\]
The following useful deduction theorem is proved by an easy induction on the height of a derivation in an axiomatic extension of $\lgc{MLL}$.

\begin{Lemma}[cf.~\cite{Avr88}]\label{lem:deductiontheorem}
Let $\lgc{L}$ be an axiomatic extension of $\lgc{MLL}$. Then for any $\Si \cup \{\f,\ps\}\subseteq\fm_m$,
\[
\Si \cup \{\f\} \der{\lgc{L}} \ps
\iff
\Si  \der{\lgc{L}} \f^n \to \ps \, \text{ for some }n \in \N.
\]
\end{Lemma}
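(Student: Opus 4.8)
The plan is to prove the two directions separately, treating the backward implication as a direct construction and the forward implication as an induction on the height of a derivation.

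For the backward direction, suppose $\Si \der{\lgc{L}} \f^n \to \ps$ for some $n \in \N$. I would first establish $\Si \cup \{\f\} \der{\lgc{L}} \f^n$ by induction on $n$: the base case is the axiom $\ut = \f^0$, and for the step I instantiate the axiom $\f \to (\ps \to (\f \cdot \ps))$ at $\f^n$ and $\f$, then apply (mp) first to the already-derived $\f^n$ and then to $\f$ (available by reflexivity), yielding $\f^{n+1} = \f^n \cdot \f$. Monotonicity lifts the hypothesis to $\Si \cup \{\f\} \der{\lgc{L}} \f^n \to \ps$, and a final (mp) gives $\Si \cup \{\f\} \der{\lgc{L}} \ps$.

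For the forward direction, assume $\Si \cup \{\f\} \der{\lgc{L}} \ps$ and induct on the height of a witnessing derivation. In the base cases $\ps$ is either an axiom of $\lgc{L}$ or a member of $\Si$, so that $\der{\lgc{L}} \ps$, respectively $\Si \der{\lgc{L}} \ps$; instantiating $\f \to (\ut \to \f)$ at $\ps$ and applying (mp) then gives $\Si \der{\lgc{L}} \ut \to \ps$, which is the claim with $n = 0$. The remaining base case $\ps = \f$ is handled with $n = 1$, since $\f^1 \to \f$ (that is, $(\ut \cdot \f) \to \f$) is an $\lgc{MLL}$-theorem obtained from the identity and unit axioms via residuation. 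For the inductive step, $\ps$ is obtained by (mp) from some $\x$ and $\x \to \ps$ having shorter derivations, so the induction hypothesis supplies $m,k \in \N$ with $\Si \der{\lgc{L}} \f^m \to \x$ and $\Si \der{\lgc{L}} \f^k \to (\x \to \ps)$. I would combine these as follows: the exchange axiom instance $(\f^k \to (\x \to \ps)) \to (\x \to (\f^k \to \ps))$ gives $\Si \der{\lgc{L}} \x \to (\f^k \to \ps)$; composing with $\f^m \to \x$ through the transitivity axiom gives $\Si \der{\lgc{L}} \f^m \to (\f^k \to \ps)$; and the residuation axiom $(\f \to (\ps \to \x)) \to ((\f \cdot \ps) \to \x)$ then yields $\Si \der{\lgc{L}} (\f^m \cdot \f^k) \to \ps$.

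The main obstacle is the final step, namely passing from $(\f^m \cdot \f^k) \to \ps$ to $\f^{m+k} \to \ps$. This rests on an auxiliary fact: in $\lgc{MLL}$ the formulas $\f^{m+k}$ and $\f^m \cdot \f^k$ are interderivable, i.e. $\der{\lgc{L}} \f^{m+k} \to (\f^m \cdot \f^k)$ and conversely. I would prove this by a separate induction (say on $k$) using associativity of $\cdot$ together with the unit laws relating $\ut \cdot \f$ and $\f$, both of which are standard consequences of the residuation and unit axioms. These unit laws are precisely what make the degenerate cases $m = 0$ or $k = 0$ go through smoothly and reconcile the convention $\f^1 = \ut \cdot \f$ with $\f$. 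Given the auxiliary fact, transitivity composes $\f^{m+k} \to (\f^m \cdot \f^k)$ with $(\f^m \cdot \f^k) \to \ps$ to close the induction with $n = m+k$; the only delicate bookkeeping is ensuring the exponents remain additive across each (mp) step.
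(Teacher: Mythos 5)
Your proof is correct and takes essentially the same route as the paper, which disposes of this lemma in one line as ``an easy induction on the height of a derivation'': your forward direction is exactly that induction, carried out in full. The backward direction via $\Si \cup \{\f\} \der{\lgc{L}} \f^n$ and the auxiliary interderivability of $\f^{m+k}$ and $\f^m \cdot \f^k$ are precisely the routine details the paper leaves implicit, and you handle them (including the $\f^1 = \ut \cdot \f$ bookkeeping) correctly.
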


It will also be useful later to consider  the logic $\lgc{MLL^u}$ defined over $\lang_m$ by the axiom system for $\lgc{MLL}$ extended with the ``unperforated" rule schema
\[
\infer[(\text{u}_n) \enspace (n \in \N^+)]{\f}{n\f}
\]
Let $\lang_\ell$ be the propositional language with connectives $\land$, $\lor$, $\cdot$, $\to$, $\ut$, and $\nf$, shortening $\fm_{\lang_\ell}$ to $\fm_\ell$. {\em Multiplicative additive linear logic without additive constants} $\lgc{MALL^-}$  over $\lang_\ell$ can be defined via the axiom system for $\lgc{MLL}$ extended with the axiom and rule schema
\[
\begin{array}{c}
\begin{array}{lcl}
(\f \land \ps) \to \f & & \f \to (\f \lor \ps)\\[.025in]
(\f \land \ps) \to \ps & & \ps \to (\f \lor \ps)\\[.025in]
((\f \to \ps) \land (\f \to \x)) \to (\f \to (\ps \land \x)) & & ((\f \to \x) \land (\ps \to \x)) \to ((\f \lor \ps) \to \x)\\[.025in]
\end{array}\\
\infer[(\text{adj})]{\f \land \ps}{\f & \ps} 
\end{array}
\]
Appropriate algebraic semantics for $\lgc{MALL^-}$ and other substructural logics are provided by classes of residuated lattices~\cite{BT03,JT02,GJKO07}. An {\em  involutive commutative residuated lattice} is an algebraic structure  $\m{A}=\langle A,\land,\lor,\cdot,\to,\ut,\nf \rangle$ such that $\langle A,\land,\lor \rangle$ is a lattice (where $a \le b\, :\Longleftrightarrow\, a \land b = a$), $\langle A,\cdot,\ut \rangle$ is a monoid, $\lnot \lnot a = a$ for all $a \in A$, and $\to$ is the  residual  of $\cdot$, i.e., $b \le a \to c \iff  a \cdot b \le c$\, for all $a,b,c\in A$. It is easily shown (see, e.g.,~\cite{GJKO07}) that the class of all involutive commutative residuated lattices can be defined by equations and hence forms a variety that we denote by  $\InCRL$.

Let $\vty{K}$ be any class of involutive commutative residuated lattices. We define for $\Si\cup\{\f\}\subseteq \fm_\ell$,
\[
\Si \mdl{\vty{K}} \f\,  \defiff 
\begin{array}{c}
 \text{for any $\m{A}\in\vty{K}$ and homomorphism $e \colon \fma_\ell \to \m{A}$,}\\[.025in]
	 \ut \le e(\ps) \text{ for all } \ps \in \Si \ \Longrightarrow \ \ut \le e(\f).
\end{array}
\]
It is easily checked that $\mdl{\vty{K}}$ is a substitution-invariant consequence relation over $\lang_\ell$; moreover, if $\vty{K}$ is a variety (equational class), this consequence relation will be finitary (see,~e.g.,~\cite{MMT14}).

For any logic $\lgc{L} = \lgc{MALL^-} \oplus \mathcal{A}$ for some $\mathcal{A} \subseteq \fm_\ell$, we obtain a variety
\[
\vty{V}_\lgc{L} := \{\m{A}\in \InCRL \mid\, \mdl{\m{A}} \ps \text{ for all }\ps\in\mathcal{A}\}.
\]
The following algebraic completeness theorem is then standard.

\begin{Proposition}[cf.~\cite{GJKO07}]\label{prop:completeness}
If $\lgc{L} = \lgc{MALL^-} \oplus \mathcal{A}$ for some $\mathcal{A} \subseteq \fm_\ell$, then for all  $\Si\cup\{\f\}\subseteq \fm_\ell$,
\[
\Si\der{\lgc{L}} \f \iff \Si \mdl{\vty{V}_\lgc{L}}\f.
\]
\end{Proposition}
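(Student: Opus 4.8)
The plan is to establish the two directions separately: soundness ($\Longrightarrow$) by induction on derivations, and completeness ($\Longleftarrow$) by a Lindenbaum--Tarski construction.

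For soundness I would fix $\m{A}\in\vty{V}_\lgc{L}$ together with a homomorphism $e\colon\fma_\ell\to\m{A}$ satisfying $\ut\le e(\ps)$ for every $\ps\in\Si$, and show by induction on the height of a derivation of $\f$ from $\Si$ in $\lgc{L}$ that $\ut\le e(\f)$. The base cases split into formulas of $\Si$ (immediate from the hypothesis), instances of the $\lgc{MLL}$ and $\lgc{MALL^-}$ axiom schemata (each interpreted at or above $\ut$ in every member of $\InCRL$, by direct appeal to residuation and the lattice and involution identities), and instances of the schemata in $\mathcal{A}$ (which hold for $e$ because the definition of $\vty{V}_\lgc{L}$ quantifies over all homomorphisms into $\m{A}$). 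For the inductive step it suffices that the two rules preserve the property: for $(\text{mp})$, from $\ut\le e(\f)$ and $\ut\le e(\f\to\ps)$ residuation gives $e(\f)\le e(\ps)$, hence $\ut\le e(\ps)$; for $(\text{adj})$, $\ut$ lies below the meet of two elements it lies below.

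For completeness I would prove the contrapositive: assuming $\Si\not\der{\lgc{L}}\f$, I construct a single refuting pair. Define $\Theta:=\{\langle\ps,\x\rangle : \Si\der{\lgc{L}}\ps\to\x \text{ and } \Si\der{\lgc{L}}\x\to\ps\}$ on $\fma_\ell$. Using the implicational axioms of $\lgc{MLL}$ together with the lattice axioms of $\lgc{MALL^-}$ to derive the requisite monotonicity and replacement lemmas for each connective, $\Theta$ is a congruence and the quotient $\m{B}:=\fma_\ell/\Theta$ is an involutive commutative residuated lattice, so $\m{B}\in\InCRL$. The key fact to pin down is that the order of $\m{B}$ is given by $[\ps]\le[\x]\iff\Si\der{\lgc{L}}\ps\to\x$, from which the defining (in)equalities of $\InCRL$ follow by exhibiting the appropriate theorems. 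Writing $e$ for the canonical surjection $\ps\mapsto[\ps]$, and using that $\Si\der{\lgc{L}}\ps$ iff $\Si\der{\lgc{L}}\ut\to\ps$ (via the axiom $\ut$, the axiom $\f\to(\ut\to\f)$, and $(\text{mp})$), the order description yields $\ut\le e(\ps)\iff\Si\der{\lgc{L}}\ps$. Thus $\ut\le e(\ps)$ for all $\ps\in\Si$ by reflexivity, while $\ut\not\le e(\f)$ since $\Si\not\der{\lgc{L}}\f$.

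It then remains to check $\m{B}\in\vty{V}_\lgc{L}$, and this is the step I expect to require the most care, since it demands $\ut\le e'(\x)$ for \emph{every} homomorphism $e'\colon\fma_\ell\to\m{B}$ and every $\x\in\mathcal{A}$, not merely for the canonical $e$. The resolution is substitution-invariance: any such $e'$ factors as $e\circ\si$ for a substitution $\si$ sending each variable to a chosen representative of its image, and since $\x$ is an axiom of $\lgc{L}=\lgc{MALL^-}\oplus\mathcal{A}$ we have $\der{\lgc{L}}\si(\x)$, hence $\Si\der{\lgc{L}}\si(\x)$ and therefore $\ut\le[\si(\x)]=e'(\x)$. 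With this the pair $\langle\m{B},e\rangle$ witnesses $\Si\not\mdl{\vty{V}_\lgc{L}}\f$, completing the contrapositive. The genuinely laborious part throughout is the catalogue of derivable monotonicity and congruence lemmas underpinning both that $\Theta$ is a congruence and that $\m{B}$ validates the $\InCRL$ equations; the remaining arguments are routine bookkeeping.
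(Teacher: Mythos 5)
Your proof is correct: soundness by induction on the height of derivations plus completeness via the Lindenbaum--Tarski quotient (with the factorization $e' = e\circ\si$ correctly handling the one delicate point, namely that the quotient lies in $\vty{V}_\lgc{L}$ and not merely in $\InCRL$) is exactly the standard argument. The paper gives no proof at all---it cites the result as standard (cf.~\cite{GJKO07})---and your argument is precisely the one that citation refers to, so this counts as the same approach.
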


The {\em multiplicative fragment} of an extension $\lgc{L}$ of $\lgc{MALL^-}$ is the logic $\lgc{L_m}$  defined over $\lang_m$ with  ${\der{\lgc{L_m}}} := {\der{\lgc{L}}} \mathrel{\cap} (\mathcal{P}(\fm_m) \times \fm_m)$.  In order to reduce consequence in $\lgc{L}$ to consequence in  $\lgc{L_m}$, we require distributivity properties that are satisfied in particular when $\lgc{L}$ is complete with respect to a class of totally ordered algebras. We therefore  consider {\em involutive uninorm logic  without additive constants} (see~\cite{MM07}), which may be defined as
\[
\lgc{IUL^-} := \lgc{MALL^-} \oplus \{((p \to q) \land \ut) \lor ((q \to p) \land\ut)\}.
\]
For any variety $\vty{V}$ of involutive commutative residuated lattices, let us denote the class of totally ordered members of $\vty{V}$ by $\vty{V}^c$. For axiomatic extensions of $\lgc{IUL^-}$, we obtain the following more specialized completeness result.

\begin{Proposition}[cf.~\cite{MM07,GJKO07}]
If $\lgc{L} = \lgc{IUL^-} \oplus \mathcal{A}$ for some $\mathcal{A} \subseteq \fm_\ell$, then for all $\Si\cup\{\f\}\subseteq \fm_\ell$,
\[
\Si\der{\lgc{L}}\f \iff \Si \mdl{\vty{V}^c_\lgc{L}} \f.
\]
\end{Proposition}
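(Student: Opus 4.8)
The plan is to derive this sharper completeness statement from the general algebraic completeness of Proposition~\ref{prop:completeness}, reducing the problem to a comparison of the two semantic consequence relations. By Proposition~\ref{prop:completeness} we have $\Si \der{\lgc{L}} \f \iff \Si \mdl{\vty{V}_\lgc{L}} \f$, so it suffices to show
\[
\Si \mdl{\vty{V}_\lgc{L}} \f \iff \Si \mdl{\vty{V}^c_\lgc{L}} \f.
\]
Since $\vty{V}^c_\lgc{L} \subseteq \vty{V}_\lgc{L}$, the left-to-right implication is immediate from the definition of $\mdl{\vty{K}}$: any algebra in $\vty{V}^c_\lgc{L}$ witnessing the failure of $\Si \mdl{\vty{V}^c_\lgc{L}} \f$ already lies in $\vty{V}_\lgc{L}$. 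All the content is in the converse.

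For the converse, the crucial structural fact is that $\vty{V}_\lgc{L}$ is \emph{semilinear}, i.e.\ every member is a subdirect product of totally ordered algebras. This is exactly where the prelinearity axiom $((p\to q)\land\ut)\lor((q\to p)\land\ut)$ built into $\lgc{IUL^-}$ does its work: it is a standard result for involutive commutative residuated lattices (see~\cite{MM07,GJKO07}) that the variety $\vty{V}_{\lgc{IUL^-}}$ of members of $\InCRL$ validating this axiom has all of its subdirectly irreducible members totally ordered. Because subdirect irreducibility is an intrinsic property of an algebra, every subdirectly irreducible member of $\vty{V}_\lgc{L} \subseteq \vty{V}_{\lgc{IUL^-}}$ is subdirectly irreducible in $\vty{V}_{\lgc{IUL^-}}$, hence a chain; and as it also lies in $\vty{V}_\lgc{L}$, it lies in $\vty{V}^c_\lgc{L}$. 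Birkhoff's subdirect representation theorem then writes each $\m{A}\in\vty{V}_\lgc{L}$ as a subdirect product of members of $\vty{V}^c_\lgc{L}$.

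With this representation, the converse follows by projecting a putative counterexample onto its totally ordered factors. Suppose $\Si \mdl{\vty{V}^c_\lgc{L}} \f$, and take $\m{A}\in\vty{V}_\lgc{L}$ together with a homomorphism $e\colon \fma_\ell \to \m{A}$ such that $\ut \le e(\ps)$ for all $\ps\in\Si$. Write $\m{A}$ as a subdirect product of chains $\m{A}_i \in \vty{V}^c_\lgc{L}$ with projections $\pi_i$, and put $e_i := \pi_i \circ e$, a homomorphism into $\m{A}_i$. Applying $\pi_i$ to $\ut \le e(\ps)$ gives $\ut \le e_i(\ps)$ for every $\ps\in\Si$, so $\ut \le e_i(\f)$ by the assumption $\Si \mdl{\vty{V}^c_\lgc{L}} \f$. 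Since order in a subdirect product is componentwise, $\ut \le e_i(\f)$ holding for all $i$ yields $\ut \le e(\f)$, which establishes $\Si \mdl{\vty{V}_\lgc{L}} \f$.

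The main obstacle is the semilinearity claim of the second paragraph, namely that prelinearity forces the subdirectly irreducible members of $\vty{V}_{\lgc{IUL^-}}$ to be totally ordered; this is the one genuinely nontrivial algebraic input, the remainder being routine bookkeeping with subdirect products and projections. I would lean on the cited literature for this representation, although it can be proved directly by showing that in any subdirectly irreducible member the monolith forces comparability of all elements, so that the lattice order is total.
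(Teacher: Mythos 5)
Your proposal is correct and follows essentially the same route as the paper, which proves nothing locally but defers (via the ``cf.~\cite{MM07,GJKO07}'' citation) to exactly this standard argument: general algebraic completeness (Proposition~\ref{prop:completeness}) combined with the fact that, for commutative residuated lattices, the prelinearity axiom of $\lgc{IUL^-}$ forces all subdirectly irreducible members of $\vty{V}_\lgc{L}$ to be totally ordered, so that Birkhoff's subdirect representation and componentwise projection reduce $\mdl{\vty{V}_\lgc{L}}$ to $\mdl{\vty{V}^c_\lgc{L}}$. Your identification of semilinearity as the sole nontrivial input, and your outline of its proof via the monolith of a subdirectly irreducible algebra, match the treatment in the cited sources.
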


Using the previous proposition, it is straightforward to prove that in any axiomatic extension $\lgc{L}$ of $\lgc{IUL^-}$, each formula $\f \in \fm_\ell$ is equivalent both to a conjunction of disjunctions of multiplicative formulas and a disjunction of conjunctions of multiplicative formulas. It is also straightforward to establish the following equivalences:
\begin{align*}
\Si \der{\lgc{L}} \f_1 \land \f_2 & \iff  \Si \der{\lgc{L}} \f_1\, \text{ and }\, \Si \der{\lgc{L}}  \f_2\\
\Si \cup \{\ps_1 \land \ps_2\}  \der{\lgc{L}} \f & \iff \Si \cup \{\ps_1,\ps_2\}  \der{\lgc{L}} \f\\
\Si \cup \{\ps_1 \lor \ps_2\}  \der{\lgc{L}} \f & \iff \Si \cup \{\ps_1\}  \der{\lgc{L}} \f \, \text{ and }\, \Si \cup \{\ps_2\}  \der{\lgc{L}} \f.
\end{align*}
Consequence in $\lgc{L}$ can therefore be reduced to consequences of the form $\Si \der{\lgc{L}} \f_1 \lor\ldots\lor\f_n$ where $\Si \cup \{\f_1,\ldots,\f_n\} \subseteq \fm_m$.

The following axiomatic extensions of $\lgc{IUL^-}$ will be of particular interest in this paper:
\begin{align*}
\lgc{A}			\ &  :=\ \lgc{IUL^-}\,\oplus\, \{(p \to p) \to \nf, \, \nf \to \ut\}\\
\lgc{RM^t}			\ &  :=\ \lgc{IUL^-}\,\oplus\, \{p \to (p + p), (p+p)\to p\} \\
\lgc{IUML^-}		\ &  :=\ \lgc{RM^t}\,\oplus\, \{\ut\to\nf\}\\
\lgc{BIUL^-}		\ &  :=\ \lgc{IUL^-}\,\oplus\, \{np \to p^n, p^n\to np  \mid n \in \N\}.
\end{align*}
The varieties $\vty{V}_\lgc{A}$, $\vty{V}_\lgc{RM^t}$, and $\vty{V}_\lgc{IUML^-}$ are term-equivalent to lattice-ordered abelian groups, Sugihara monoids, and odd Sugihara monoids, respectively, while $\vty{V}_\lgc{BIUL^-}$ (where ``B" stands for ``balanced") consists of involutive commutative residuated lattices satisfying $x^n \eq nx$ for all $n \in \N$.

%%%%%%%%%%%%%%%%%%%%%%%%%%%%%%%%%%%%%%%%%%%%%%%%%%%%%%%%%

\section{Theorems of Alternatives}\label{sec:alternatives}

We will say that an extension $\lgc{L}$ of $\lgc{IUL^-}$ admits a {\em theorem of alternatives} if for any multiplicative formulas $\Si \cup \{\f_1,\ldots,\f_n\} \subseteq \fm_m$,
\[
\Si \der{\lgc{L}} \f_1 \lor \ldots \lor \f_n \iff \Si \der{\lgc{L}}  \la_1 \f_1 + \cdots +  \la_n \f_n\, \text{ for some $\la_1,\ldots,\la_n \in \N$ not all $0$.}
\]
Such logics must satisfy the law of excluded middle and a ``mingle" axiom.

\begin{Lemma}\label{lem:useful}
Let $\lgc{L}$ be an extension of $\lgc{IUL^-}$ that admits a theorem of alternatives. Then
\begin{enumerate}
\item[\rm (i)]  $\der{\lgc{L}} p\lor \lnot p$
\item[\rm (ii)]  $\der{\lgc{L}} \nf \to \ut$.
\end{enumerate}
\end{Lemma}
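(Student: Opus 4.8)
The plan is to read both statements off the theorem of alternatives by feeding it, in each case, a disjunction of multiplicative formulas that is provably valid and inspecting the resulting linear combination. For (i) I would run the equivalence backwards on $p \lor \lnot p$: taking $\la_1 = \la_2 = 1$ (not both $0$), the required combination is $p + \lnot p$, which by the definition of $+$ is literally $\lnot p \to \lnot p$, an instance of the axiom $\f \to \f$ of $\lgc{MLL}$ and so derivable. The theorem of alternatives then immediately gives $\der{\lgc{L}} p \lor \lnot p$.

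For (ii) the aim is to force the theorem of alternatives to produce a derivable formula encoding the mingle axiom, namely $m\ut$ with $m \ge 2$. First I would establish $\der{\lgc{L}} 2p \lor 2\lnot p$: in any totally ordered involutive commutative residuated lattice and for each element $a$, either $\lnot a \le a$, whence $\ut \le \lnot a \to a$ (the interpretation of $2p$), or $a \le \lnot a$, whence $\ut \le a \to \lnot a$ (the interpretation of $2\lnot p$); thus the disjunction is valid throughout $\vty{V}^c_\lgc{L}$ and, by the completeness result for extensions of $\lgc{IUL^-}$, derivable. Applying the theorem of alternatives yields $\la_1,\la_2 \in \N$, not both $0$, with $\der{\lgc{L}} 2\la_1 p + 2\la_2 \lnot p$. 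Reading this through completeness and evaluating at $a = \ut$ when $\la_1 \ge 1$ (and at $a = \nf$ when instead $\la_2 \ge 1$), while using that $\lnot\ut = \nf$ is neutral for $+$, the summands built from $\nf$ vanish and I obtain $\ut \le \lnot(\nf^{m})$, i.e. $\nf^{m} \le \nf$, in every $\m{A} \in \vty{V}^c_\lgc{L}$ for a single fixed even $m \ge 2$.

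It then remains to pass from $\nf^m \le \nf$ to $\nf \le \ut$, which I would do by a monotonicity squeeze in each totally ordered $\m{A} \in \vty{V}^c_\lgc{L}$. If $\nf > \ut$, then monotonicity of $\cdot$ gives $\nf = \nf^1 \le \nf^2 \le \cdots \le \nf^m \le \nf$, forcing $\nf^2 = \nf$; but $\nf^2 \le \nf$ is equivalent by residuation to $\nf \le \nf \to \nf = \lnot \nf = \ut$, contradicting $\nf > \ut$. Hence $\nf \le \ut$ in every such $\m{A}$, and completeness delivers $\der{\lgc{L}} \nf \to \ut$. The hard part is the middle step rather than this final squeeze: the naive disjunction $p \lor \lnot p$ is useless here, since its trivial alternative $\la_1 = \la_2 = 1$ already witnesses the theorem of alternatives and collapses to $\der{\lgc{L}} \ut$; the doubling in $2p \lor 2\lnot p$ is precisely what guarantees an exponent $m \ge 2$, and the symmetry between evaluating at $\ut$ and at $\nf$ is what rescues the case where one of $\la_1,\la_2$ vanishes.
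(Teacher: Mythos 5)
Part (i) of your proposal is correct and is essentially the paper's own argument: the paper observes $\der{\lgc{L}} \lnot p + p$ (an instance of $\lnot\lnot\f\to\f$), you observe $\der{\lgc{L}} p + \lnot p$ (an instance of $\f\to\f$); either way the right-to-left direction of the theorem of alternatives immediately gives $\der{\lgc{L}} p\lor\lnot p$. Part (ii) also begins exactly as in the paper: the same disjunction $(p+p)\lor(\lnot p+\lnot p)$ (derivable because it is valid in every totally ordered involutive commutative residuated lattice), the same application of the theorem of alternatives, and the same substitutions of $\ut$ and $\nf$ for $p$, arriving at $\der{\lgc{L}} m\ut$ for a single even $m\geq 2$. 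Your order-theoretic squeeze ($\nf^m\le\nf$ forces $\nf\le\ut$ in a chain) is also correct as algebra.

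The gap is in the last passage, from $\der{\lgc{L}} m\ut$ to $\der{\lgc{L}} \nf\to\ut$, which you route through soundness and completeness of $\lgc{L}$ with respect to $\vty{V}^c_\lgc{L}$. The lemma is stated for an \emph{arbitrary} extension $\lgc{L}$ of $\lgc{IUL^-}$, i.e., any logic whose consequence relation contains $\der{\lgc{IUL^-}}$, not necessarily an axiomatic one; for such $\lgc{L}$ the paper does not even define $\vty{V}_\lgc{L}$, and its completeness results are proved only for axiomatic extensions. For extensions by additional rules, completeness with respect to totally ordered models is not guaranteed, so the round trip syntax-to-$\vty{V}^c_\lgc{L}$-to-syntax is unjustified. (The same issue occurs in your justification of $\der{\lgc{L}} 2p\lor 2\lnot p$, but there it is harmless: validity in all totally ordered involutive commutative residuated lattices gives derivability already in $\lgc{IUL^-}$, hence in $\lgc{L}$.) Two repairs are available. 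First, keep your algebra but perform it as a consequence in $\lgc{IUL^-}$ itself: your squeeze shows $\{m\ut\}\mdl{\vty{V}^c_{\lgc{IUL^-}}}\nf\to\ut$, so the completeness theorem applied to $\lgc{IUL^-}$ (an axiomatic extension of itself) gives $\{m\ut\}\der{\lgc{IUL^-}}\nf\to\ut$, and cut with $\der{\lgc{L}} m\ut$ finishes. Second, the paper's route, which is purely syntactic and avoids semantics entirely at this stage: since your $m=2\la$ is even, $m\ut$ is equivalent in $\lgc{MLL}$ to $\la(\ut+\ut)$, so a second application of the theorem of alternatives --- the $n=1$ instance, $\der{\lgc{L}}\f$ iff $\der{\lgc{L}}\la\f$ for some $\la\in\N^+$ --- yields $\der{\lgc{L}}\ut+\ut$ directly, and $\ut+\ut=\lnot\ut\to\ut$ is provably equivalent to $\nf\to\ut$. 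Your care in arranging an even exponent makes this second repair a one-line finish.
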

\begin{proof}
For \text{(i)}, it suffices to observe that  $\der{\lgc{L}} \lnot p + p$ and  hence, by the theorem of alternatives for $\lgc{L}$, also  $\der{\lgc{L}} p\lor \lnot p$. For \text{(ii)}, we note first that $\der{\lgc{IUL^-}} (\lnot p + \lnot p) \lor (p + p)$. Hence, by the theorem of alternatives for $\lgc{L}$, we obtain $\der{\lgc{L}} \la  (\lnot p + \lnot p) + \mu (p+p)$ for some $\la,\mu\in\N$ not both $0$. If $\mu \neq 0$, then substituting $\ut$ for $p$ yields $\der{\lgc{L}} \mu (\ut + \ut)$ and, by a further application of the theorem of alternatives, $\der{\lgc{L}} \ut + \ut$. Similarly, if $\la \neq 0$, then substituting $\nf$ for $p$ yields $\der{\lgc{L}} \la  (\ut + \ut)$ and, again by the theorem of alternatives,   $\der{\lgc{L}} \ut + \ut$. In both cases it follows that $\der{\lgc{L}} \nf \to \ut$.
\end{proof}

We therefore define $\lgc{IUL^\star} = \lgc{IUL^-} \oplus \{p\lor \lnot p,\, \nf \to \ut\}$ and note that one direction of the theorem of alternatives holds for any extension of this logic.

\begin{Lemma}\label{lem:onedirection}
For any extension $\lgc{L}$ of $\lgc{IUL^\star}$ and $\Si \cup \{\f_1,\ldots,\f_n\} \subseteq \fm_m$,
\[
\Si \der{\lgc{L}}  \la_1 \f_1 + \cdots +  \la_n \f_n\, \text{ for some $\la_1,\ldots,\la_n \in \N$ not all $0$} \enspace\Longrightarrow\enspace \Si \der{\lgc{L}} \f_1 \lor \ldots \lor \f_n.
\]
\end{Lemma}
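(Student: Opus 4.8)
The plan is to prove the single-premise consequence
\[
\la_1 \f_1 + \cdots + \la_n \f_n \der{\lgc{IUL^\star}} \f_1 \lor \cdots \lor \f_n
\]
already at the base logic $\lgc{IUL^\star}$, and then to transfer it to an arbitrary extension. Once this is available, since $\der{\lgc{IUL^\star}} \subseteq \der{\lgc{L}}$ for every extension $\lgc{L}$ of $\lgc{IUL^\star}$, the same consequence holds in $\lgc{L}$; applying cut to it together with the hypothesis $\Si \der{\lgc{L}} \la_1 \f_1 + \cdots + \la_n \f_n$ then yields $\Si \der{\lgc{L}} \f_1 \lor \cdots \lor \f_n$, as required. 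This reduction is what lets us work with the concrete logic $\lgc{IUL^\star}$ rather than an abstract extension $\lgc{L}$, to which the completeness results for axiomatic extensions need not apply.

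Since $\lgc{IUL^\star} = \lgc{IUL^-} \oplus \{p \lor \lnot p, \nf \to \ut\}$ is an axiomatic extension of $\lgc{IUL^-}$, I would invoke completeness with respect to its totally ordered algebras: it suffices to show for every $\m{A} \in \vty{V}^c_{\lgc{IUL^\star}}$ and homomorphism $e \colon \fma_\ell \to \m{A}$, writing $a_i := e(\f_i)$, that $\ut \le \la_1 a_1 + \cdots + \la_n a_n$ implies $\ut \le a_1 \lor \cdots \lor a_n$. As $\m{A}$ is a chain, $a_1 \lor \cdots \lor a_n$ is the greatest of the $a_i$; moreover the two extra axioms guarantee $\nf \le \ut$ and that each element satisfies $a \ge \ut$ or $a \le \nf$ (reading $\ut \le a \lor \lnot a$ as $\max(a,\lnot a) \ge \ut$ and using that $\lnot$ is order-reversing with $\lnot \nf = \ut$).

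I would then argue by contraposition. Assume every $a_i < \ut$; by excluded middle each $a_i \le \nf$. The crux is a sub-lemma stating that $+$ preserves the region below $\nf$: from the involutive identity $a + b = \lnot(\lnot a \cdot \lnot b)$, the facts $\lnot \nf = \ut$ and $\lnot \ut = \nf$, antitonicity of $\lnot$, and monotonicity of $\cdot$, one gets $\lnot a \cdot \lnot b \ge \ut$ and hence $a + b \le \nf$ whenever $a,b \le \nf$. Iterating over the combination $\la_1 a_1 + \cdots + \la_n a_n$, which is a genuine nonempty $+$-sum because the $\la_i$ are not all $0$, gives $\la_1 a_1 + \cdots + \la_n a_n \le \nf$, so $\ut \not\le \la_1 a_1 + \cdots + \la_n a_n$ as soon as $\nf < \ut$.

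The main obstacle is the degenerate case $\nf = \ut$, where $\le \nf$ does not exclude equality with $\ut$. I expect to resolve it by sharpening the sub-lemma to a strict inequality: when $\nf = \ut$ the hypothesis $a_i < \ut$ becomes $a_i < \nf$, whence $\lnot a_i > \ut$, and $\lnot a > \ut$ together with $\lnot b \ge \ut$ forces $\lnot a \cdot \lnot b > \ut$ and thus $a + b < \nf$; iterating shows the whole sum is strictly below $\nf = \ut$. In either case $\ut \not\le \la_1 a_1 + \cdots + \la_n a_n$, establishing the contrapositive. Completeness then gives the consequence over $\lgc{IUL^\star}$, and the reduction of the first paragraph finishes the proof.
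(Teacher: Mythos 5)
Your proof is correct, but it follows a genuinely different route from the paper's. The paper's own argument is purely syntactic and very short: since $\f + \ps$ is by definition $\lnot\f \to \ps$, from $\Si \der{\lgc{L}} (\f + \ps) \lor \x$ and the theorem $\der{\lgc{L}} \f \lor \lnot\f$ one derives $\Si \der{\lgc{L}} \f \lor \ps \lor \x$ inside the logic, and a simple induction peels the sum $\la_1 \f_1 + \cdots + \la_n \f_n$ apart one summand at a time, with repeated disjuncts absorbed via $\f \to (\f \lor \ps)$. You instead reduce to the single-premise consequence $\{\la_1\f_1 + \cdots + \la_n\f_n\} \der{\lgc{IUL^\star}} \f_1 \lor \cdots \lor \f_n$ and settle it semantically through completeness of $\lgc{IUL^\star}$ with respect to its totally ordered algebras, arguing by contraposition: excluded middle forces every element strictly below $\ut$ to lie below $\nf$, and $+$ (as $\lnot(\lnot a \cdot \lnot b)$) keeps such elements below $\nf$, strictly so in the degenerate case $\nf = \ut$. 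Your algebraic steps all check out --- the identity $a + b = \lnot(\lnot a \cdot \lnot b)$, the equivalence of $\lnot a \ge \ut$ with $a \le \nf$, and the strict refinement needed when $\nf = \ut$ hold in every chain in $\vty{V}_{\lgc{IUL^\star}}$. What the paper's route buys is brevity and independence from the completeness machinery; in particular it never needs the case split on whether $\nf = \ut$ (which genuinely occurs, e.g., in odd Sugihara monoids). What your route buys is an explicit treatment of a subtlety the paper leaves implicit: $\lgc{L}$ is an arbitrary extension of $\lgc{IUL^\star}$, not necessarily axiomatic, so the chain-completeness proposition cannot be applied to $\lgc{L}$ directly; your reduction to a consequence of $\lgc{IUL^\star}$ itself, transferred to $\lgc{L}$ by inclusion of consequence relations and cut, makes precise exactly the kind of justification hiding behind the paper's word ``observe''.
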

\begin{proof}
Observe that for any  $\Si \cup \{\f,\ps,\x\} \subseteq \fm_\ell$, if $\Si \der{\lgc{L}} (\f + \ps) \lor \x$, then $\Si \der{\lgc{L}} (\lnot \f \to \ps) \lor \x$ and, since $\der{\lgc{L}} \f \lor \lnot \f$, also $\Si \der{\lgc{L}} \f \lor \ps \lor \x$. The claim  follows by a simple inductive argument.
\end{proof}

We now establish a sufficient condition for extensions of $\lgc{IUL^\star}$ that are axiomatized via additional multiplicative formulas by considering  corresponding axiomatic extensions of $\lgc{MLL}$. Since $\nf \to \ut$ is derivable in the multiplicative fragment of any extension of $\lgc{IUL^\star}$, we let $\lgc{MLL_0} := \lgc{MLL} \oplus \{\nf \to \ut\}$ and $\lgc{MLL_0^u} :=  \lgc{MLL^u} \oplus \{\nf \to \ut\}$.

\begin{Lemma}\label{lem:thmalts}
Let $\mathcal{A} \subseteq \fm_m$ and  suppose that for any $\Si \cup\{\f\}\subseteq \fm_m$,
\[
\Si \der{\lgc{IUL^\star}\oplus\mathcal{A}} \f \iff \Si  \der{\lgc{MLL_0}\oplus\mathcal{A}} \la \f \ \mbox{ for some }\la \in \N^+.
\]
Then $\lgc{IUL^\star}\oplus\mathcal{A}$ admits a theorem of alternatives and its multiplicative fragment  is $\lgc{MLL_0^u}\oplus \mathcal{A}$.  
\end{Lemma}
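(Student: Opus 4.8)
The plan is to prove the three assertions in turn, writing $\lgc{L} := \lgc{IUL^\star}\oplus\mathcal{A}$ and $\lgc{L_0} := \lgc{MLL_0}\oplus\mathcal{A}$. One direction of the theorem of alternatives is immediate: since $\lgc{L}$ extends $\lgc{IUL^\star}$, Lemma~\ref{lem:onedirection} already gives that $\Si\der{L}\la_1\f_1+\cdots+\la_n\f_n$ for some $\la_i$ not all $0$ implies $\Si\der{L}\f_1\lor\cdots\lor\f_n$. For the multiplicative fragment I would introduce the relation $\Si\vdash^\star\f \defiff \Si\der{L_0}\la\f$ for some $\la\in\N^+$; the hypothesis of the lemma says precisely that $\vdash^\star$ agrees with $\der{L}$, hence with $\der{L_m}$, on $\fm_m$. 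It then suffices to show $\vdash^\star = {\der{\lgc{MLL_0^u}\oplus\mathcal{A}}}$. The inclusion $\vdash^\star \subseteq {\der{\lgc{MLL_0^u}\oplus\mathcal{A}}}$ is a single application of the rule $(\text{u}_\la)$ to $\Si\der{\lgc{MLL_0^u}\oplus\mathcal{A}}\la\f$, valid since $\lgc{MLL_0^u}\oplus\mathcal{A}$ extends $\lgc{L_0}$ and $\la\ge 1$. For the converse I would check that $\vdash^\star$ is a substitution-invariant consequence relation extending $\lgc{L_0}$ and closed under every $(\text{u}_n)$: reflexivity, monotonicity, cut, substitution-invariance, and closure under modus ponens are inherited from $\der{L}$ via the hypothesis, while closure under $(\text{u}_n)$ is immediate because $\Si\der{L_0}\la(n\f)=(\la n)\f$ witnesses $\Si\vdash^\star\f$.

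The substance of the lemma is the remaining direction of the theorem of alternatives, which I would prove by induction on $n$ in the form: $\Si\der{L}\f_1\lor\cdots\lor\f_n$ implies $\Si\der{L}\la_1\f_1+\cdots+\la_n\f_n$ for some $\la\in\N^n\setminus\{\mathbf{0}\}$. The base case $n=1$ is exactly the hypothesis together with $\der{L_0}\subseteq\der{L}$. For the inductive step I would merge the first two disjuncts. The defining prelinearity axiom of $\lgc{IUL^-}$ gives $\der{L}((\f_1\to\f_2)\land\ut)\lor((\f_2\to\f_1)\land\ut)$, so by cut and the premise-disjunction equivalence of Section~\ref{sec:prelim} the assumption splits into the two consequences obtained by adding $(\f_1\to\f_2)\land\ut$, respectively $(\f_2\to\f_1)\land\ut$, to $\Si$. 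Using the equivalence $\Si\cup\{\ps_1\land\ps_2\}\der{L}\f\Longleftrightarrow\Si\cup\{\ps_1,\ps_2\}\der{L}\f$ and discarding the vacuous premise $\ut$, these premises may be replaced by the multiplicative formulas $\f_1\to\f_2$ and $\f_2\to\f_1$. In the first branch the added premise forces $\f_1\le\f_2$, whence $\{\f_1\to\f_2\}\der{L}(\f_1\lor\f_2)\to\f_2$ and the conclusion collapses to $\Si\cup\{\f_1\to\f_2\}\der{L}\f_2\lor\f_3\lor\cdots\lor\f_n$, a disjunction of $n-1$ multiplicative formulas; symmetrically for the second branch. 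The induction hypothesis then yields positive combinations in each branch, and the deduction theorem (Lemma~\ref{lem:deductiontheorem}) discharges the extra premises, producing multiplicative theorems $\chi_\alpha := (\f_1\to\f_2)^k\to(b_2\f_2+\cdots+b_n\f_n)$ and $\chi_\beta := (\f_2\to\f_1)^l\to(a_1\f_1+a_3\f_3+\cdots+a_n\f_n)$ with $\Si\der{L}\chi_\alpha$, $\Si\der{L}\chi_\beta$, and the coefficient tuples nonzero.

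It remains to recombine $\chi_\alpha$ and $\chi_\beta$ into a single positive combination of the $\f_i$. Here I would pass to $\lgc{L_0}$ through the hypothesis, obtaining $\Si\der{L_0}\mu_\alpha\chi_\alpha$ and $\Si\der{L_0}\mu_\beta\chi_\beta$ for some $\mu_\alpha,\mu_\beta\in\N^+$, linearise the tensor powers using the readily derivable $\der{L_0}\theta^m\to m\theta$ (which in turn rests on $\der{L_0}\f\cdot\ps\to(\f+\ps)$, itself a consequence of $\nf\to\ut$), and take the positive combination with cross-weights $l$ and $k$. A direct computation in the group-like reading shows that this combination carries the nonnegative integer coefficients $ka_1$, $lb_2$, and $lb_i+ka_i$ for $i\ge 3$, so the target tuple lies in $\N^n\setminus\{\mathbf{0}\}$, the cases $k=0$ or $l=0$ reducing immediately to one of $\chi_\alpha,\chi_\beta$. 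Using closure of $\der{L_0}$ under $\cdot$ to assemble a single antecedent and applying the hypothesis in reverse then delivers $\Si\der{L}\la_1\f_1+\cdots+\la_n\f_n$. I expect the main obstacle to be exactly this last step: verifying that the implication $(\mu_\alpha\chi_\alpha)\cdot(\mu_\beta\chi_\beta)\to(ka_1\f_1+lb_2\f_2+\sum_{i\ge 3}(lb_i+ka_i)\f_i)$ is a genuine theorem of $\lgc{L_0}$, i.e.\ valid in \emph{all} involutive commutative residuated lattices satisfying $\nf\le\ut$ and $\mathcal{A}$ rather than merely in the totally ordered ones, together with the bookkeeping guaranteeing that the resulting coefficient tuple is never entirely zero.
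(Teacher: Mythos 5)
Your treatment of the multiplicative fragment and of the right‑to‑left direction of the theorem of alternatives is correct and matches the paper's (Lemma~\ref{lem:onedirection} plus closure of the relation ``$\Si\der{\lgc{MLL_0}\oplus\mathcal{A}}\la\f$ for some $\la\in\N^+$'' under the rules $(\text{u}_n)$). The gap is in the core of your induction, and it sits exactly where you flagged it: the recombination of the two prelinearity branches. Writing, as you do, $\lgc{L}$ for $\lgc{IUL^\star}\oplus\mathcal{A}$ and $\lgc{L_0}$ for $\lgc{MLL_0}\oplus\mathcal{A}$, your argument needs an implication of the shape
\[
\x_\alpha^{\,l}\cdot\x_\beta^{\,k}\to\bigl(k a_1\f_1+l b_2\f_2+{\textstyle\sum_{i\ge3}}(l b_i+k a_i)\f_i\bigr),
\qquad \x_\alpha=(\f_1\to\f_2)^{k}\to\Theta,\quad \x_\beta=(\f_2\to\f_1)^{l}\to\Xi,
\]
to be a \emph{theorem} of $\lgc{L_0}$. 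It is not, and no choice of weights repairs this. The ``group-like reading'' in which the $(\f_1\to\f_2)$-terms cancel is sound only for abelian $\ell$-groups. In $\lgc{MLL_0}$ one has mix ($\f\cdot\ps\to\f+\ps$) and linear distribution, so each copy of $\x_\beta$ yields copies of $\f_1\to\f_2$ glued by $+$; but the antecedent of $\x_\alpha$ needs them glued by $\cdot$, and $+$-to-$\cdot$ regrouping is precisely what $\lgc{MLL_0}$ cannot do (mix never reverses; already $(\ps_1+\ps_2)\cdot(\ps_3+\ps_4)\to((\ps_1\cdot\ps_3)+(\ps_2\cdot\ps_4))$ fails). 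Concretely, for $n=2$, $k=l=2$, $\Theta=\f_2$, $\Xi=\f_1$, the formula $\x_\alpha\cdot\x_\beta\to(\f_1+\f_2)$ is \emph{not} derivable in $\lgc{MLL_0}$ --- an exhaustive cut-free proof search in the corresponding sequent calculus (multiplicative linear logic with mix), or the proof-net acyclicity criterion, rules it out --- even though it is valid in every totally ordered model and every $\ell$-group, and the cross-weighted variants run into the same obstruction. The hypothesis of the lemma cannot rescue the step, since it converts $\lgc{L}$-consequences into $\lgc{L_0}$-consequences but says nothing about such implications being theorems. Indeed, this obstruction is exactly why Theorem~\ref{t:toa} assumes $\der{\lgc{MLL_0}\oplus\mathcal{A}}(np)^k\to m(p^n)$: that axiom supplies the missing regrouping, and it is deliberately \emph{not} available in Lemma~\ref{lem:thmalts}.

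There is also a smaller, fixable, misstep: you invoke Lemma~\ref{lem:deductiontheorem} for $\der{\lgc{L}}$ to produce $\Si\der{\lgc{L}}\x_\alpha$, but that lemma is proved only for axiomatic extensions of $\lgc{MLL}$, and the deduction theorem in this form genuinely fails for multiplicative fragments closed under $(\text{u}_n)$: in Abelian logic $\{p+p\}\der{\lgc{A}}p$, yet $(p+p)^k\to p$ is a theorem for no $k$. One must first pass to $\lgc{L_0}$ via the hypothesis (acquiring a multiplier $\mu$) and apply the deduction theorem there. The paper's proof avoids both problems by never splitting into branches at all: take a fresh variable $p$, observe that $\Si\cup\{\f_1\to p,\ldots,\f_n\to p\}\der{\lgc{L}}p$, apply the hypothesis and then Lemma~\ref{lem:deductiontheorem} inside $\lgc{L_0}$ to obtain $\Si\der{\lgc{L_0}}(\f_1\to p)^{\la_1}\to\cdots\to(\f_n\to p)^{\la_n}\to\la p$, and finally substitute $p\mapsto\nf$ (or $p\mapsto\f_1$ if all $\la_i=0$); the substitution alone turns this into the required linear combination, so no recombination of separately derived branches is ever needed. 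Your induction, by contrast, cannot be closed without in effect assuming the stronger hypothesis of Theorem~\ref{t:toa}.
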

\begin{proof}
By Lemma~\ref{lem:onedirection}, it suffices to prove the left-to-right direction of the theorem of alternatives for $\lgc{IUL^\star}\oplus\mathcal{A}$. Suppose that  $\Si \der{\lgc{IUL^\star}\oplus\mathcal{A}} \f_1 \lor \ldots \lor \f_n$ for some  $\Si \cup \{\f_1,\ldots,\f_n\} \subseteq \fm_m$ and let $p$ be a variable such that $p\not\in{\rm Var}(\Si \cup \{\f_1,\ldots,\f_n\})$. Observe that
\[
\{\f_1 \to p,\ldots,\f_n\to p,\f_1 \lor \ldots \lor \f_n\} \der{\lgc{IUL^\star}\oplus\mathcal{A}} p
\]
and hence
\[
\Si \cup \{\f_1 \to p,\ldots,\f_n\to p\} \der{\lgc{IUL^\star}\oplus\mathcal{A}}  p.
\]
By assumption, there exists $\la\in\N^+$ such that
\[
\Si \cup \{\f_1 \to p,\ldots,\f_n\to p\}  \der{\lgc{MLL_0}\oplus\mathcal{A}} \la p.
\]
But then, using Lemma~\ref{lem:deductiontheorem}, there exist $\la_1,\ldots,\la_n \in \N$ such that
\[
\Si \der{\lgc{MLL_0}\oplus\mathcal{A}}  (\f_1 \to p)^{\la_1} \to \ldots \to (\f_n \to p)^{\la_n}   \to \la p.
\]
If all the  $\la_1,\ldots,\la_n$ are $0$, then we can substitute $p$ with $\f_1$ and obtain $\Si \der{\lgc{MLL_0}\oplus\mathcal{A}}  \la \f_1$. Otherwise, we substitute $p$ with $0$ and obtain
\[
\Si  \der{\lgc{MLL_0}\oplus\mathcal{A}}  \la_1 \f_1 + \cdots +  \la_n \f_n.
\]
So clearly also $\Si  \der{\lgc{IUL^\star}\oplus\mathcal{A}}  \la_1 \f_1 + \cdots +  \la_n \f_n$. 

Finally, it follows directly from the assumption and the fact that $\{\la \f\} \der{\lgc{IUL^\star}} \f$ that $\lgc{MLL_0^u}\oplus \mathcal{A}$ is the multiplicative fragment of $\lgc{IUL^\star}\oplus\mathcal{A}$. 
\end{proof}

We are now able to formulate a sufficient condition for admitting a theorem of alternatives for logics axiomatized relative to $\lgc{IUL^\star}$ by multiplicative formulas.

\begin{Theorem} \label{t:toa}
Let $\mathcal{A}\subseteq \fm_m$ and  suppose that for some $n_0 \in \N$, whenever $n \ge n_0$, there exist $m\in\N^+,k\in\N$ such that $\der{\lgc{MLL_0}\oplus\mathcal{A}} (np)^k\to m(p^n)$. Then $\lgc{IUL^\star}\oplus\mathcal{A}$ admits a theorem of alternatives and its multiplicative fragment  is $\lgc{MLL_0^u}\oplus \mathcal{A}$.  
\end{Theorem}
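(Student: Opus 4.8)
The plan is to derive the theorem from Lemma~\ref{lem:thmalts}: it suffices to verify the biconditional
\[
\Si \der{\lgc{IUL^\star}\oplus\mathcal{A}} \f \iff \Si \der{\lgc{MLL_0}\oplus\mathcal{A}} \la\f \text{ for some } \la\in\N^+,
\]
for all $\Si\cup\{\f\}\subseteq\fm_m$, since the theorem of alternatives and the identification of the multiplicative fragment as $\lgc{MLL_0^u}\oplus\mathcal{A}$ then follow directly from that lemma. The right-to-left direction needs neither the hypothesis nor the total order: every axiom of $\lgc{MLL_0}$ is an axiom schema of $\lgc{MALL^-}$ or is $\nf\to\ut$, and $\mathcal{A}$ is added to both logics, so $\der{\lgc{MLL_0}\oplus\mathcal{A}}$ is contained in $\der{\lgc{IUL^\star}\oplus\mathcal{A}}$ on multiplicative formulas; hence $\Si \der{\lgc{MLL_0}\oplus\mathcal{A}} \la\f$ gives $\Si \der{\lgc{IUL^\star}\oplus\mathcal{A}} \la\f$, and one concludes by cutting with the fact that $\{\la\f\} \der{\lgc{IUL^\star}} \f$. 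All the work is therefore in the left-to-right direction, which is where the hypothesis on $(np)^k\to m(p^n)$ must enter.

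For that direction I would first strip the antecedent. Since $\lgc{IUL^\star}\oplus\mathcal{A}$ is finitary, $\Si$ may be taken finite, and iterating the deduction theorem (Lemma~\ref{lem:deductiontheorem}) in $\lgc{IUL^\star}\oplus\mathcal{A}$, together with currying, replaces $\Si \der{\lgc{IUL^\star}\oplus\mathcal{A}} \f$ by a single theorem $\der{\lgc{IUL^\star}\oplus\mathcal{A}} \x\to\f$, where $\x$ is a product of powers of the formulas of $\Si$. Conversely, $\{\ps\}\der{\lgc{MLL_0}} \ps^n$ (by the deduction theorem applied backwards) and hence $\Si \der{\lgc{MLL_0}\oplus\mathcal{A}} \x$, so it is enough to prove
\[
\der{\lgc{IUL^\star}\oplus\mathcal{A}} \x\to\f \enspace\Longrightarrow\enspace \der{\lgc{MLL_0}\oplus\mathcal{A}} \x^{j}\to\la\f \text{ for some } j\in\N,\ \la\in\N^+ ;
\]
re-applying the deduction theorem in $\lgc{MLL_0}\oplus\mathcal{A}$ and cutting with $\Si \der{\lgc{MLL_0}\oplus\mathcal{A}} \x$ then yields $\Si \der{\lgc{MLL_0}\oplus\mathcal{A}} \la\f$ as wanted. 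I would establish this implication contrapositively: assuming $\der{\lgc{MLL_0}\oplus\mathcal{A}} \x^{j}\to\la\f$ fails for all $j$ and $\la$, I must build a totally ordered model of $\lgc{IUL^\star}\oplus\mathcal{A}$ refuting $\x\to\f$, invoking completeness of axiomatic extensions of $\lgc{IUL^-}$ with respect to their chains.

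The heart of the argument, and the step where I expect the genuine difficulty, is the construction of this chain. I would begin from the Lindenbaum--Tarski algebra of $\lgc{MLL_0}\oplus\mathcal{A}$, a partially ordered involutive commutative residuated monoid in which, by the standing assumption, $[\x]^{j}\not\le\la[\f]$ for every $j$ and $\la$. The target is a member of $\vty{V}^c_{\lgc{IUL^\star}\oplus\mathcal{A}}$, namely a chain validating $\mathcal{A}$ and $\nf\to\ut$ in which every element lies above $\ut$ or below $\nf$ (the form excluded middle takes on chains) and which is unperforated, i.e. $na\ge\ut$ implies $a\ge\ut$, equipped with an evaluation keeping $\x$ strictly above $\f$. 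This is an order-extension problem of exactly the type used for Gordan's theorem and Abelian logic: one totalizes the positivity order while preserving the separation of $[\x]$ from $[\f]$, with the freedom in $\la$ and $j$ providing the room to impose unperforation. The hypothesis $\der{\lgc{MLL_0}\oplus\mathcal{A}} (np)^k\to m(p^n)$ is precisely what reconciles the additive and multiplicative structures during the totalization: for $n\ge n_0$ it forces $p^n$ and $np$ to be mutually cofinal up to a fixed scaling, so $\cdot$ and $+$ cannot be pulled apart by the total order and the resulting chain is simultaneously gapped at the identity and unperforated. Verifying that the totalization can be carried out so as to land inside $\vty{V}^c_{\lgc{IUL^\star}\oplus\mathcal{A}}$, rather than merely among the chains of $\lgc{MLL_0}\oplus\mathcal{A}$, is the main obstacle, and it is exactly there that the assumed derivability of $(np)^k\to m(p^n)$ is indispensable.
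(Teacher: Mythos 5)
Your overall frame coincides with the paper's: both reduce the theorem to Lemma~\ref{lem:thmalts}, handle right-to-left exactly as the paper does, and prove left-to-right contrapositively by building a totally ordered countermodel. But your argument has a genuine error at the first real step, the ``stripping of the antecedent.'' Lemma~\ref{lem:deductiontheorem} applies only to axiomatic extensions of $\lgc{MLL}$, i.e., logics over $\lang_m$ whose sole rule is modus ponens; $\lgc{IUL^\star}\oplus\mathcal{A}$ is a logic over $\lang_\ell$ with the rule (adj), and its local deduction theorem has the form $\Si\cup\{\f\}\der{\lgc{L}}\ps \iff \Si\der{\lgc{L}}(\f\land\ut)^n\to\ps$, where the conjunction with $\ut$ cannot be dropped. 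Indeed the purely multiplicative version you use is false for precisely the logics covered by this theorem: in Abelian logic $\lgc{A}=\lgc{IUL^\star}\oplus\{(p\to p)\to\nf\}$ (which satisfies the hypothesis), take $\Si=\{p+p\}$ and $\f=p$. Then $\Si\der{\lgc{A}}p$, since totally ordered abelian groups are unperforated, yet $\not\der{\lgc{A}}(p+p)^n\to p$ for every $n\in\N$: in $\mathbb{Z}$ the inequality $2np\le p$ fails for $p=1$ when $n\ge 1$, and $n=0$ gives $\ut\to p$, which also fails. So no product of powers $\x$ of members of $\Si$ with $\der{}\x\to\f$ need exist; this unperforation phenomenon is exactly why the multiplicative fragment must be axiomatized with the \emph{rule} $(\text{u}_n)$ rather than by axioms, and it means consequences from $\Si$ cannot be internalized as multiplicative theorems. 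The paper never internalizes: $\Si$ stays on the left of the turnstile throughout, and the condition it preserves is $(\star)$: $\Si_N\not\der{\lgc{MLL_0}\oplus\mathcal{A}}\la\f$ for all $\la\in\N^+$.

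Separately, what you call ``the heart of the argument'' --- totalizing the order so as to land in $\vty{V}^c_{\lgc{IUL^\star}\oplus\mathcal{A}}$ while preserving the separation --- is precisely the mathematical content of the proof, and you defer it rather than carry it out. The paper does it by a Lindenbaum-style maximalization, not by an order-extension on the Lindenbaum algebra: enumerate $\fm_m$ as $(\ps_i)_{i\in\N}$ and extend $\Si$ step by step with $\ps_N$ or $\lnot\ps_N$, preserving $(\star)$. The hypothesis enters in a concrete derivational argument: if both extensions violated $(\star)$, then Lemma~\ref{lem:deductiontheorem} (legitimately applied, inside $\lgc{MLL_0}\oplus\mathcal{A}$) gives $\Si_N\der{}(\ps_N)^r\to\la\f$ and $\Si_N\der{}(\lnot\ps_N)^s\to\mu\f$; these are reshaped, using $\nf\to\ut$, into $(\ps_N)^{rs}\to s\la\f$ and $rs\ps_N+r\mu\f$, and then the assumed theorem $(rst\,\ps_N)^k\to m(\ps_N)^{rst}$ (the hypothesis with $n=rst$ for large $t$) splices the two into $\Si_N\der{}(mst\la+krt\mu)\f$, contradicting $(\star)$. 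The total order on the quotient of $\fma_{\lang_m}$ by the resulting theory $\Si^*$ then comes for free from completeness of $\Si^*$ (it contains $\ps$ or $\lnot\ps$ for every $\ps\in\fm_m$), with no extension argument needed, and the quotient validates $\mathcal{A}\cup\{p\lor\lnot p,\nf\to\ut\}$ while refuting $\Si\der{}\f$. Until you both repair the reduction (keeping $\Si$ as hypotheses) and supply this construction, the proposal does not prove the theorem.
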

\begin{proof}
Assume that for some $n_0 \in \N$, whenever $n \ge n_0$, there exist $m\in\N^+,k\in\N$ such that $\der{\lgc{IUL^\star}\oplus\mathcal{A}} (np)^k\to m(p^n)$.  By Lemma~\ref{lem:thmalts}, to show that  $\lgc{IUL^\star}\oplus\mathcal{A}$ admits a theorem of alternatives and its multiplicative fragment  is $\lgc{MLL_0^u}\oplus \mathcal{A}$, it suffices to prove that for any $\Si \cup\{\f\}\subseteq \fm_m$,
\[
\Si \der{\lgc{IUL^\star}\oplus\mathcal{A}} \f \iff \Si  \der{\lgc{MLL_0}\oplus\mathcal{A}} \la \f \ \mbox{ for some }\la \in \N^+.
\]
Suppose first that $ \Si  \der{\lgc{MLL_0}\oplus\mathcal{A}} \la \f$\, for some $\la\in \N^+$. Then also $\Si\der{\lgc{IUL^\star}\oplus\mathcal{A}}  \la \f$ and, since $\{\la \f\} \der{\lgc{IUL^\star}\oplus\mathcal{A}} \f$, it follows that  $\Si \der{\lgc{IUL^\star}\oplus\mathcal{A}}  \f$.  To prove the converse, we assume contrapositively that $\Si_0 := \Si$ satisfies
\[
(\star) \enspace \Si_0 \not \der{\lgc{MLL_0}\oplus\mathcal{A}}  \la \f \ \text{ for all }\la\in\N^+.
\]
We enumerate $\fm_m$ as $(\ps_i)_{i\in\N}$. Suppose now that $\Si_N$ for some $N\in\N$ contains $\ps_i$ or $\lnot \ps_i$ for all $i < N$ and satisfies $(\star)$. Consider $\ps_N$ and suppose for a contradiction that for some $\la,\mu \in\N^+$,
\[
\Si_N \cup \{\ps_N\} \der{\lgc{MLL_0}\oplus\mathcal{A}}  \la \f \enspace \text{and} \enspace
\Si_N \cup \{\lnot \ps_N\} \der{\lgc{MLL_0}\oplus\mathcal{A}}  \mu \f.
\]
By Lemma~\ref{lem:deductiontheorem}, there exist $r,s \in \N$ such that
\[
\Si_N\der{\lgc{MLL_0}\oplus\mathcal{A}}   (\ps_N)^r \to\la \f \enspace \text{and} \enspace
\Si_N  \der{\lgc{MLL_0}\oplus\mathcal{A}}  (\lnot \ps_N)^s \to \mu \f.
\]
Using the fact that $\der{\lgc{MLL_0}} \nf \to 1$, it  follows easily that also
\[
\text{(i)} \enspace \Si_N\der{\lgc{MLL_0}\oplus\mathcal{A}}  (\ps_N)^{rs} \to s \la \f  \enspace \text{and} \enspace
\text{(ii)} \enspace  \Si_N\der{\lgc{MLL_0}\oplus\mathcal{A}}    (\lnot\ps_N)^{rs} \to  r \mu \f,
\]
where \text{(ii)} can be rewritten, more conveniently, as
\[
\text{(ii')} \enspace  \Si_N\der{\lgc{MLL_0}\oplus\mathcal{A}}    rs\ps_N  + r \mu \f.
\]
By assumption, with $n=rst$ for some large $t \in \N^+$, there exist $m \in  \N^+,k\in\N$  such that
\[
\der{\lgc{MLL_0}\oplus\mathcal{A}}  (rst (\ps_N))^k \to m(\ps_N)^{rst}.
\]
Observe also that, using \text{(i)} and {\rm (ii')},
\[
\Si_N\der{\lgc{MLL_0}\oplus\mathcal{A}}  m(\ps_N)^{rst} \to mst\la \f\enspace \text{and} \enspace
\Si_N\der{\lgc{MLL_0}\oplus\mathcal{A}}  (rst)\ps_N  + (rt \mu) \f.
\]
Hence $\Si_N \der{\lgc{MLL_0}\oplus\mathcal{A}} (rst (\ps_N))^k \to mst\la \f$,  yielding 
\[
\Si_N \der{\lgc{MLL_0}\oplus\mathcal{A}} (mst\la + krt\mu) \f,
\]
which contradicts the assumption that $\Si_N$ satisfies $(\star)$. So $\Si_N$ can be extended with either $\ps_N$ or $\lnot\ps_N$ to obtain $\Si_{N+1}\subseteq \fm_m$ that  satisfies $(\star)$. We then let
\[
\Si^* := \bigcup_{i \in \N} \Si_i,
\]
noting that, by a simple argument using the fact that $\der{\lgc{MLL_0}\oplus\mathcal{A}}$ is finitary, $\Si^*$ also satisfies $(\star)$. 

Next, we define a binary relation $\Theta$ on $\fm_m$ by
\[
\ps \mathrel{\Theta} \x \defiff \Si^* \der{\lgc{MLL_0}\oplus\mathcal{A}} \ps \to \x \ \text{ and }\ \Si^* \der{\lgc{MLL_0}\oplus\mathcal{A}} \x \to \ps.
\]
It is then straightforward to show that $\Theta$ is in fact a congruence on $\m{Fm_m}$ and hence that the set of equivalence classes $\fm_m / \Theta = \{[\ps] \mid \ps\in \fm_m\}$, where $[\ps] = \{\x\in\fm_m \mid \ps\mathrel{\Theta}\x\}$, can be equipped with well-defined binary operations $\cdot$ and $\to$ and constants $[\ut]$ and $[\nf]$. Now define also
\[
[\ps] \le [\x] \defiff  \Si^* \der{\lgc{MLL_0}\oplus\mathcal{A}} \ps \to \x.
\]
This is a total order by construction and hence we can equip $\langle\fm_m / \Theta,\cdot,\to,[\ut],[\nf]\rangle$ also with meet and join operations $\land$ and $\lor$. It is then straightforward to show that the resulting algebra belongs to $\InCRL$ and satisfies each member of $\mathcal{A} \cup  \{p\lor \lnot p,\, \nf \to \ut\}$. Finally, we consider a homomorphism $e$ mapping each formula $\x$ to its equivalence class $[\x]$, obtaining $[\ut] \le e(\ps)$ for all $\ps\in\Si^\star$ and $[\ut] \not\le e(\f)$. Hence $\Si \not \der{\lgc{IUL^\star}\oplus\mathcal{A}} \f$ as required.
\end{proof} 

Clearly, the logics $\lgc{BIUL^-}$,  $\lgc{A}$, $\lgc{RM^t}$, and $\lgc{IUML^-}$ defined in Section~\ref{sec:prelim} satisfy the condition of these previous theorem and admit a theorem of alternatives. More generally, we obtain the following result for extensions of $\lgc{BIUL^-}$.

\begin{Corollary}
For any $\mathcal{A}\subseteq \fm_m$, the logic $\lgc{BIUL^-}\oplus\mathcal{A}$ admits a theorem of alternatives and its multiplicative fragment  is $\lgc{MLL_0^u}\oplus \{np \to p^n, p^n \to np \mid n \in \N\} \oplus\mathcal{A}$.  
\end{Corollary}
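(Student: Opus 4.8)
The plan is to obtain the result as a direct instance of Theorem~\ref{t:toa}. Write $\mathcal{A}' := \{np\to p^n,\, p^n\to np \mid n\in\N\}\cup\mathcal{A}$, and note that every formula in $\mathcal{A}'$ is multiplicative, so $\mathcal{A}'\subseteq\fm_m$. By the definition of $\lgc{BIUL^-}$ we have $\lgc{BIUL^-}\oplus\mathcal{A} = \lgc{IUL^-}\oplus\mathcal{A}'$, so it suffices to (a) identify this logic with $\lgc{IUL^\star}\oplus\mathcal{A}'$, and (b) verify the derivability hypothesis of Theorem~\ref{t:toa} for $\mathcal{A}'$. The conclusion of that theorem then yields both the theorem of alternatives and the description of the multiplicative fragment as $\lgc{MLL_0^u}\oplus\mathcal{A}'$, which is exactly $\lgc{MLL_0^u}\oplus\{np\to p^n, p^n\to np\mid n\in\N\}\oplus\mathcal{A}$ since $\oplus$ of a union is the iterated $\oplus$.

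Verifying hypothesis (b) is immediate and requires no genuine computation. Fix $n_0 = 0$; then for every $n\in\N$ the formula $np\to p^n$ lies in $\mathcal{A}'$ and is therefore derivable in $\lgc{MLL_0}\oplus\mathcal{A}'$. Taking $k = 1$ and $m = 1$, the required formula $(np)^k\to m(p^n)$ is, via the provable equivalences $\f^1\eq\f$ and $1\f\eq\f$, derivably equivalent to precisely $np\to p^n$; hence $\der{\lgc{MLL_0}\oplus\mathcal{A}'}(np)^k\to m(p^n)$ holds with $m\in\N^+$ and $k\in\N$ as demanded. Thus the hypothesis of Theorem~\ref{t:toa} holds trivially for every $n$.

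The only real work is step (a): showing that $\lgc{IUL^-}\oplus\mathcal{A}'$ already derives the two extra axioms $\nf\to\ut$ and $p\lor\lnot p$ of $\lgc{IUL^\star}$, so that $\lgc{IUL^-}\oplus\mathcal{A}' = \lgc{IUL^\star}\oplus\mathcal{A}'$. For $\nf\to\ut$ there is nothing to do: it is the instance $0p\to p^0$ of the balanced axioms, hence a member of $\mathcal{A}'$. The formula $p\lor\lnot p$ is the main (if mild) obstacle, and I would derive it semantically using completeness of extensions of $\lgc{IUL^-}$ with respect to their totally ordered members. Since $\mathcal{A}'$ also contains the instance $p^0\to 0p$, i.e.\ $\ut\to\nf$, every totally ordered $\m{A}\in\vty{V}^c_{\lgc{IUL^-}\oplus\mathcal{A}'}$ validates both $\nf\to\ut$ and $\ut\to\nf$, and hence satisfies $\nf\eq\ut$. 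In such a chain, for any $a\in A$ either $\ut\le a$, whence $\ut\le a\le a\lor\lnot a$, or $a\le\ut$, in which case $a\cdot\ut = a\le\ut = \nf$ gives $\ut\le a\to\nf = \lnot a\le a\lor\lnot a$ by residuation; either way $\ut\le a\lor\lnot a$. Thus $\mdl{\vty{V}^c_{\lgc{IUL^-}\oplus\mathcal{A}'}}p\lor\lnot p$, and chain-completeness yields $\der{\lgc{IUL^-}\oplus\mathcal{A}'}p\lor\lnot p$. With both extra axioms derivable we conclude $\lgc{IUL^\star}\subseteq\lgc{IUL^-}\oplus\mathcal{A}'$, hence $\lgc{IUL^-}\oplus\mathcal{A}' = \lgc{IUL^\star}\oplus\mathcal{A}'$, and Theorem~\ref{t:toa} applies to finish the proof.
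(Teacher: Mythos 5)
Your proof is correct and follows essentially the same route the paper intends: the corollary is presented there as an immediate instance of Theorem~\ref{t:toa}, with the balanced axioms $np \to p^n$ witnessing the derivability condition trivially (your $k=m=1$, $n_0=0$). Your extra step (a) --- checking that $\lgc{IUL^-}$ plus the balanced axioms derives $\nf\to\ut$ (the $n=0$ instance) and $p\lor\lnot p$ (via chain completeness and $\ut\eq\nf$), so that $\lgc{BIUL^-}\oplus\mathcal{A}$ really is of the form $\lgc{IUL^\star}\oplus\mathcal{A}'$ --- is precisely the detail the paper leaves implicit, and your argument for it is sound.
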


Note that in the statement of Theorem~\ref{t:toa}, the $m\in\N^+,k\in\N$ satisfying the condition $\der{\lgc{MLL_0}\oplus\mathcal{A}} (np)^k\to m(p^n)$ depend in general on the particular $n \ge n_0$. If the logic proves a knotted axiom of the form $p^t \to p^{t+1}$ ($t\in\N^+$), however, these parameters can be fixed.

\begin{Corollary}
For any $\mathcal{A}\subseteq \fm_m$ and  $k, m, r, s, t\in \N^+$, with $r,s \geq t$, the logic $\lgc{IUL^\star}\oplus \mathcal{A}\cup\{p^t \to p^{t+1}, (r p)^k \to m (p^s)\}$ admits a theorem of alternatives and its multiplicative fragment  is $\lgc{MLL_0^u}\oplus\mathcal{A}\cup\{p^t \to p^{t+1}, (r p)^k \to m (p^s)\}$.  
\end{Corollary}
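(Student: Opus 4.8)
The plan is to deduce the statement from Theorem~\ref{t:toa}. Writing $\mathcal{A}' := \mathcal{A}\cup\{p^t \to p^{t+1},\,(rp)^k \to m(p^s)\}$, it suffices to find some $n_0$ such that for every $n \ge n_0$ the formula $(np)^k \to m(p^n)$ is derivable in $\lgc{MLL_0}\oplus\mathcal{A}'$, and the whole point of the hypotheses $r,s\ge t$ is that this can be done with the \emph{fixed} parameters $k$ and $m$ supplied by the second axiom, rather than parameters varying with $n$. Below I abbreviate $\der{\lgc{MLL_0}\oplus\mathcal{A}'}$ by $\vdash$; every derivation I use invokes only the two displayed axioms, so it is available in $\lgc{MLL_0}\oplus\mathcal{A}'$ regardless of $\mathcal{A}$.

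First I would extract from the knotted axiom two opposite monotonicities. Iterating $p^t\to p^{t+1}$ by transitivity of $\to$ and monotonicity of $\cdot$ (both derivable in $\lgc{MLL}$) gives $\vdash p^a\to p^b$ whenever $t\le a\le b$, so multiplicative powers \emph{increase} above $t$. Dually, substituting $\lnot p$ for $p$ yields $\vdash (\lnot p)^t\to(\lnot p)^{t+1}$; contraposing and using the $\lgc{MLL}$-interderivability of $jp$ and $\lnot((\lnot p)^j)$ (an easy induction from the interderivability of $a+b$ and $\lnot(\lnot a\cdot\lnot b)$) rewrites this as $\vdash (t+1)p\to tp$, and iterating gives $\vdash bp\to ap$ whenever $t\le a\le b$; thus the additive ($+$) powers \emph{decrease} above $t$.

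With these two facts, for any $n\ge n_0:=\max(r,s)$ I would build the desired implication as a three-step chain. Since $t\le r\le n$, the decreasing additive powers give $\vdash np\to rp$, and monotonicity of the $k$-th $\cdot$-power gives $\vdash (np)^k\to(rp)^k$; the second axiom gives $\vdash (rp)^k\to m(p^s)$; and since $t\le s\le n$, the increasing multiplicative powers give $\vdash p^s\to p^n$, whence monotonicity of $+$ (hence of the $m$-fold sum) gives $\vdash m(p^s)\to m(p^n)$. Composing by transitivity yields $\vdash (np)^k\to m(p^n)$. As this holds for all $n\ge n_0$ with $k$ and $m$ fixed, Theorem~\ref{t:toa} applies with its $\mathcal{A}$ taken to be $\mathcal{A}'$, delivering both the theorem of alternatives for $\lgc{IUL^\star}\oplus\mathcal{A}'$ and the identification of its multiplicative fragment as $\lgc{MLL_0^u}\oplus\mathcal{A}'$, which is the claim.

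I expect the main obstacle to be conceptual rather than computational: spotting and correctly proving the \emph{duality} by which the single axiom $p^t\to p^{t+1}$ simultaneously pushes multiplicative powers up and additive powers down, and then ensuring that all indices stay in the region $\ge t$ where both monotonicities hold. This is exactly the role of the assumptions $r,s\ge t$: they keep the two chains $np\to rp$ and $p^s\to p^n$ inside that region. The remaining ingredients---monotonicity of $\cdot$ and of $+$, contraposition, and the interderivability of $jp$ and $\lnot((\lnot p)^j)$---are routine facts about $\lgc{MLL}$ with involution and require no further hypotheses.
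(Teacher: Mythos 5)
Your proposal is correct and follows essentially the same route as the paper: apply Theorem~\ref{t:toa} with $n_0=\max(r,s)$ and establish $(np)^k\to m(p^n)$ via the chain $(np)^k\to(rp)^k\to m(p^s)\to m(p^n)$, where the first step uses the decreasing additive powers $(t+1)p\to tp$ obtained from $p^t\to p^{t+1}$ by involutivity, and the last uses the increasing multiplicative powers. The only cosmetic difference is that the paper phrases this chain as inequalities in the corresponding class of algebras, whereas you carry it out syntactically in $\lgc{MLL_0}\oplus\mathcal{A}'$, which is in fact exactly what the hypothesis of Theorem~\ref{t:toa} asks for.
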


For example, the logic $\lgc{IUL^\star}\oplus  \{p^2 \to p^3,(4 p)^5 \to 6 (p^7)\}$ has a theorem of alternatives.  More generally, for any  $\mathcal{A}\subseteq \fm_m$ and $t,u,r_0, k_0, m_0, s_0, \ldots, r_{u-1}, k_{u-1}, m_{u-1}, s_{u-1} \in \N^+$, where all the $r_i$ and $s_i$ are congruent to $i$ modulo $u$ and greater or equal to $t$, the logic  $\lgc{IUL^\star}\oplus \mathcal{A}\cup \{p^t \to p^{t+u}\} \cup \{ (r_i p)^{k_i} \to m_i (p^{s_i}) \mid 0 \leq i < u\}$ admits a theorem of alternatives. To see this, we apply Theorem~\ref{t:toa} with $n_0=\max(\{r_i \mid i<u\} \cup \{s_i \mid i<u\})$. For each  $n\geq n_0$, we let $i$ be the remainder of dividing $n$ by $u$ and choose $k=k_i$, $m=m_i$.  That $(np)^k \to m p^n$ is derivable in this logic can be shown by reasoning that in the corresponding variety of residuated lattices, $(n p)^{k_i} \leq (r_i p)^{k_i} \leq m_i (p^{s_i}) \leq m_i (p^n)$. (The last inequality follows from repeated applications of $p^t \leq p^{t+u}$ and the first by repeated applications of $(t+u)p \leq t p$, which follows from $p^t \leq p^{t+u}$ and involutivity.) 

Let us note that in the special cases of $\lgc{A}$, $\lgc{RM^t}$, and $\lgc{IUML^-}$, the theorem of alternatives can be established {\`a} la Avron~\cite{Avr87} using either the completeness of the logic and its multiplicative fragment with respect to certain algebras or a hypersequent calculus that admits cut elimination. However, in the case of $\lgc{BIUL^-}$ and other logics covered by the above results, suitable algebras and hypersequent calculi are not available, so these methods cannot be followed. What can be said is that if an extension  of  $\lgc{IUL^\star}$ admits a theorem of alternatives, then any analytic calculus for its multiplicative fragment can be extended to an analytic calculus for the full logic using versions of the mix and split rules.

%%%%%%%%%%%%%%%%%%%%%%%%%%%%%%%%%%%%%%%%%%%%%%%%%%%%%%%%%%%%%%%%%

\section{Interpolation}\label{sec:interpolation}

A logic $\lgc{L}$ over a propositional language $\lang$ is said to have the {\em deductive interpolation property} if for any finite $\Si\cup\{\f\} \subseteq \fm_\lang$ satisfying $\Si\der{\lgc{L}}\f$, there exists $\Pi\subseteq \fm_\lang$ with ${\rm Var}(\Pi) \subseteq {\rm Var}(\Si) \cap {\rm Var}(\f)$ such that  $\Pi\der{\lgc{L}}\f$ and $\Si\der{\lgc{L}}\ps$ for all $\ps\in\Pi$. It is easily shown (see, e.g.,~\cite{vGMT17}) that this is equivalent to the following condition:

\begin{itemize}
\item[($\dagger$)]
For any finite $\Si \subseteq \fm_\lang$ and $X \subseteq {\rm Var}(\Si)$, there exists  $\Pi\subseteq \fm_\lang$ with  ${\rm Var}(\Pi) \subseteq X$ such that for any $\f\in\fm_\lang$ satisfying $ {\rm Var}(\Si) \cap {\rm Var}(\f) \subseteq X$,
\[
\Si\der{\lgc{L}}\f \iff \Pi\der{\lgc{L}}\f.
\]
\end{itemize}

\noindent
If $\Pi$ in ($\dagger$) can always be finite, then $\lgc{L}$ is said to have the {\em right uniform deductive interpolation property}. If $\Pi$ can always be finite, but ($\dagger$) is restricted to formulas $\f\in\fm_\lang$ with ${\rm Var}(\f) \subseteq X$, then  $\lgc{L}$ is said to be {\em coherent}. It is proved in~\cite{KM19} that $\lgc{L}$ has the right uniform deductive interpolation property if and only if it has the deductive interpolation property and is coherent.  

Recall that the {\em multiplicative fragment} of an extension $\lgc{L}$ of $\lgc{MALL^-}$ is the logic $\lgc{L_m}$  defined over $\lang_m$ with  consequence relation ${\der{\lgc{L_m}}} := {\der{\lgc{L}}} \mathrel{\cap} (\mathcal{P}(\fm_m) \times \fm_m)$. We show now that an extension of $\lgc{IUL^-}$ that admits a theorem of alternatives inherits deductive interpolation and coherence properties from its multiplicative fragment. 

\begin{Theorem}
Let $\lgc{L}$ be an extension of $\lgc{IUL^-}$ that admits a theorem of alternatives.
\begin{itemize}
\item[\rm (a)] If $\lgc{L_m}$ has the deductive interpolation property, then so does $\lgc{L}$.
\item[\rm (b)] If $\lgc{L_m}$ is coherent, then so is $\lgc{L}$.
\item[\rm (c)] If $\lgc{L_m}$ has the right uniform deductive interpolation property, then so does $\lgc{L}$.
 \end{itemize}
\end{Theorem}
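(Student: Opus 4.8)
The plan is to reduce every consequence of $\lgc{L}$ to finitely many consequences of $\lgc{L_m}$, apply the assumed property of $\lgc{L_m}$ componentwise, and reassemble the results using the lattice connectives. Two reductions are needed. For the antecedent, I would combine the displayed equivalences for $\land$ and $\lor$ on the left with the fact that each $\ps \in \Si$ is equivalent to a disjunction of conjunctions of multiplicative formulas: distributing, a finite $\Si \subseteq \fm_\ell$ becomes equivalent to a single formula $\bigvee_i \sigma_i$ with each $\sigma_i = \bigwedge \Gamma_i$ for some finite $\Gamma_i \subseteq \fm_m$, so that $\Si \der{\lgc{L}} \f \iff \Gamma_i \der{\lgc{L}} \f$ for every $i$ (for all $\f$), with ${\rm Var}(\Gamma_i) \subseteq {\rm Var}(\Si)$. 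For the conclusion, writing $\f$ as a conjunction of disjunctions of multiplicative formulas $\bigwedge_l \bigvee_p \f_{lp}$ and using the equivalence for $\land$ on the right, $\Gamma \der{\lgc{L}} \f$ reduces to the consequences $\Gamma \der{\lgc{L}} \bigvee_p \f_{lp}$; by the theorem of alternatives each of these is equivalent to $\Gamma \der{\lgc{L_m}} \theta_l$ for some multiplicative $\theta_l = \la_1 \f_{l1} + \cdots + \la_{m} \f_{lm}$ with ${\rm Var}(\theta_l) \subseteq {\rm Var}(\f)$, while conversely Lemma~\ref{lem:onedirection} turns any such $\Gamma \der{\lgc{L_m}} \theta_l$ back into $\Gamma \der{\lgc{L}} \bigvee_p \f_{lp}$. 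All of these rewritings preserve variables.

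For (a) I would work with the direct definition of deductive interpolation. Given $\Si \der{\lgc{L}} \f$, the two reductions yield, for each $i$ and $l$, a multiplicative consequence $\Gamma_i \der{\lgc{L_m}} \theta_{il}$. Deductive interpolation for $\lgc{L_m}$ supplies a $\Pi_{il} \subseteq \fm_m$ with ${\rm Var}(\Pi_{il}) \subseteq {\rm Var}(\Gamma_i) \cap {\rm Var}(\theta_{il}) \subseteq {\rm Var}(\Si) \cap {\rm Var}(\f)$ such that $\Pi_{il} \der{\lgc{L_m}} \theta_{il}$ and $\Gamma_i \der{\lgc{L_m}} \chi$ for all $\chi \in \Pi_{il}$. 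Writing $\delta_i$ for the conjunction of all formulas in $\bigcup_l \Pi_{il}$ and $\pi := \bigvee_i \delta_i$, I would then check that $\Pi := \{\pi\}$ is the required interpolant: $\pi \der{\lgc{L}} \f$ holds because each $\Pi_{il} \der{\lgc{L}} \bigvee_p \f_{lp}$ (Lemma~\ref{lem:onedirection}) recombines through the $\land$/$\lor$ equivalences, while $\Si \der{\lgc{L}} \pi$ holds because $\Gamma_i \der{\lgc{L}} \delta_i$ and $\delta_i \der{\lgc{L}} \pi$ for each $i$, which recombine via the antecedent reduction. The variable bound on $\pi$ is immediate from those on the $\Pi_{il}$.

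For (b) the construction is the same in outline, but the interpolant must now be chosen uniformly in $\f$ (ranging over all $\f$ with ${\rm Var}(\f) \subseteq X$), so I would apply coherence of $\lgc{L_m}$ to each $\Gamma_i$ rather than to the individual $\theta_{il}$. Here lies the main obstacle: coherence of $\lgc{L_m}$ only produces an interpolant whose variable set is contained in ${\rm Var}(\Gamma_i)$, whereas $X$ need not be a subset of ${\rm Var}(\Gamma_i)$, since the disjunct chosen for $\Gamma_i$ may discard some variables of $\Si$. I would resolve this by padding: replace $\Gamma_i$ with $\Gamma_i \cup \{q \to q \mid q \in X \setminus {\rm Var}(\Gamma_i)\}$, which has exactly the same consequences (each added formula being a theorem) but whose variable set contains $X$, and apply coherence of $\lgc{L_m}$ with parameter $X$ to obtain a finite $\Pi_i \subseteq \fm_m$ with ${\rm Var}(\Pi_i) \subseteq X$ and $\Gamma_i \der{\lgc{L_m}} \ps \iff \Pi_i \der{\lgc{L_m}} \ps$ for every $\ps \in \fm_m$ with ${\rm Var}(\ps) \subseteq X$. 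Using the conclusion reduction and Lemma~\ref{lem:onedirection} exactly as before, this upgrades to $\Gamma_i \der{\lgc{L}} \f \iff \Pi_i \der{\lgc{L}} \f$ for all $\f$ with ${\rm Var}(\f) \subseteq X$, and then $\Pi := \bigl\{\bigvee_i \bigwedge \Pi_i\bigr\}$ is a finite uniform interpolant for $\Si$, as required.

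Finally, (c) I would obtain as a corollary: since $\lgc{L_m}$ has the right uniform deductive interpolation property, by the characterization of~\cite{KM19} it has deductive interpolation and is coherent, so by (a) and (b) the same holds for $\lgc{L}$, and applying~\cite{KM19} in the other direction yields right uniform deductive interpolation for $\lgc{L}$. The only genuinely delicate point in the whole argument is the variable bookkeeping in (b) described above; everything else is routine manipulation of the two reductions together with the easy direction of the theorem of alternatives.
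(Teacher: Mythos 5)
Your proposal follows the same overall strategy as the paper's proof: reduce antecedent and consequent to the multiplicative fragment via the normal-form equivalences and the theorem of alternatives, apply the assumed property of $\lgc{L_m}$, and reassemble interpolants as disjunctions of conjunctions. The differences are in execution. The paper works throughout with the reformulation ($\dagger$) of deductive interpolation and proceeds by induction on the number of occurrences of $\lor$ in $\Si$, whereas you distribute $\Si$ into a disjunctive normal form $\bigvee_i \bigwedge \Gamma_i$ up front and use the direct definition for (a); these are interchangeable. For (c), the paper observes that its construction preserves finiteness of the interpolant sets, while you invoke the characterization from \cite{KM19} of right uniform deductive interpolation as deductive interpolation plus coherence; your route is cleaner and equally valid. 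Your treatment of (b) is in fact more careful than the paper's: the paper merely states that the construction of (a) ``will also yield a set $\Pi$ that is finite,'' but, as you observe, coherence of $\lgc{L_m}$ can only be invoked for parameter sets contained in the variables of the antecedent, and the passage from $\Si$ to $\Gamma_i$ may discard variables of $X$. Your padding with theorems $q \to q$ is exactly the repair needed, and the same repair is needed to push the paper's own induction through for coherence, since the condition ${\rm Var}(\f) \subseteq X$, unlike ${\rm Var}(\Si) \cap {\rm Var}(\f) \subseteq X$, does not relativize when variables are dropped from the antecedent.

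One small point needs attention in your (a). The paper's definition of the deductive interpolation property allows the interpolant set to be infinite, so ``the conjunction of all formulas in $\bigcup_l \Pi_{il}$'' is not well-defined in general. Either note that $\lgc{L_m}$ is finitary (true for the axiomatic extensions the paper actually considers), so that each $\Pi_{il}$ may be shrunk to a finite subset still deriving $\theta_{il}$ and still derivable from $\Gamma_i$, or imitate the paper's recombination, which sidesteps the issue by taking as interpolant a \emph{set} of formulas, namely all $\bigvee_i \x_i$ where each $\x_i$ is a finite conjunction of members of $\bigcup_l \Pi_{il}$. With that repair the argument is correct, including the variable bookkeeping ${\rm Var}(\Pi_{il}) \subseteq {\rm Var}(\Gamma_i) \cap {\rm Var}(\theta_{il}) \subseteq {\rm Var}(\Si) \cap {\rm Var}(\f)$ and the two recombination checks $\Si \der{\lgc{L}} \pi$ and $\pi \der{\lgc{L}} \f$.
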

\begin{proof}
Suppose for (a) that  $\lgc{L_m}$ has the deductive interpolation property. We consider first any finite $\Si \subseteq \fm_m$ and $X \subseteq {\rm Var}(\Si)$. By assumption, there exists $\Pi\subseteq \fm_m$ such that  ${\rm Var}(\Pi) \subseteq X$ and for any $\f\in\fm_m$ satisfying $ {\rm Var}(\Si) \cap {\rm Var}(\f) \subseteq X$,
\[
\Si\der{\lgc{L}}\f \iff \Pi\der{\lgc{L}}\f.
\]
Hence also for any $\f_1,\ldots,\f_n\in\fm_m$ satisfying ${\rm Var}(\Si) \cap {\rm Var}(\{\f_1,\ldots,\f_n\}) \subseteq X$, by the theorem of alternatives,
\begin{align*}
\Si \der{\lgc{L}} \f_1 \lor \ldots \lor \f_n & \iff \Si \der{\lgc{L}}  \la_1 \f_1 + \cdots +  \la_n \f_n\, \text{ for some $\la_1,\ldots,\la_n \in \N$ not all $0$}\\
& \iff  \Pi  \der{\lgc{L}}  \la_1 \f_1 + \cdots +  \la_n \f_n\, \text{ for some $\la_1,\ldots,\la_n \in \N$ not all $0$}\\
& \iff \Pi  \der{\lgc{L}}  \f_1 \lor \ldots \lor \f_n.
\end{align*}
Moreover, recalling that every $\f\in\fm_\ell$ is equivalent in $\lgc{L}$ to a conjunction of disjunction of formulas in $\fm_m$ and that for any $\De \cup \{\ps_1,\ps_2\}\subseteq\fm_\ell$,
\[
\De \der{\lgc{L}} \ps_1 \land \ps_2 \iff  \De \der{\lgc{L}} \ps_1\, \text{ and }\, \De \der{\lgc{L}}  \ps_2,
\]
it follows that for any $\f\in\fm_\ell$  satisfying $ {\rm Var}(\Si) \cap {\rm Var}(\f) \subseteq X$,
\[
\Si\der{\lgc{L}}\f \iff \Pi\der{\lgc{L}}\f.
\]
Now consider any finite $\Si \subseteq \fm_\ell$ and $X \subseteq {\rm Var}(\Si)$. Since for any $\De \cup \{\ps_1,\ps_2\}\subseteq\fm_\ell$,
\[
\De \cup \{\ps_1 \land \ps_2\}  \der{\lgc{L}} \f  \iff \De \cup \{\ps_1,\ps_2\}  \der{\lgc{L}} \f,
\]
we may assume that $\Si$ consists of disjunctions of formulas in $\fm_m$. Suppose that $\Si = \Si' \cup \{\ps_1 \lor \ps_2\}$ and there exist $\Pi_1 \cup \Pi_2 \subseteq \fm_\ell$ such that  ${\rm Var}(\Pi_1 \cup \Pi_2) \subseteq X$ and for any $\f\in\fm_\ell$ satisfying ${\rm Var}(\Si) \cap {\rm Var}(\f) \subseteq X$,
\[
\Si' \cup \{\ps_1\} \der{\lgc{L}}\f \iff \Pi_1\der{\lgc{L}}\f \quad\text{and}\quad \Si' \cup \{\ps_2\} \der{\lgc{L}}\f \iff \Pi_2\der{\lgc{L}}\f.
\]
We define
\[
\Pi := \{(\x_1 \land \ldots \land \x_n) \lor (\x'_1\land \ldots \land \x'_m) \mid \x_1,\ldots,\x_n \in \Pi_1,\, \x'_1,\ldots,\x'_m \in \Pi_2\}.
\]
Then ${\rm Var}(\Pi) \subseteq X$ and for any $\f\in\fm_\ell$ satisfying ${\rm Var}(\Si) \cap {\rm Var}(\f) \subseteq X$,
\begin{align*}
\Si \der{\lgc{L}} \f  & \iff\Si' \cup\{\ps_1 \lor \ps_2\} \der{\lgc{L}} \f\\
& \iff \Si' \cup\{\ps_1\} \der{\lgc{L}} \f \enspace\text{and}\enspace \Si' \cup\{\ps_2\} \der{\lgc{L}} \f\\
& \iff \Pi_1 \der{\lgc{L}} \f \enspace\text{and}\enspace \Pi_2 \der{\lgc{L}} \f\\
& \iff \Pi \der{\lgc{L}} \f.
\end{align*}
Hence it follows by induction on the number of occurrences of $\lor$ in $\Si$ that there exists $\Pi\subseteq \fm_\lang$ such that  ${\rm Var}(\Pi) \subseteq X$ and for any $\f\in\fm_\lang$ satisfying $ {\rm Var}(\Si) \cap {\rm Var}(\f) \subseteq X$,
\[
\Si\der{\lgc{L}}\f \iff \Pi\der{\lgc{L}}\f.
\]
For (b) and (c), we just note that if  $\lgc{L_m}$ is  coherent or has the right uniform deductive interpolation property, then the construction described above will also yield a set $\Pi$ that is  finite and hence $\lgc{L}$ will be coherent or have  the right uniform deductive interpolation property, respectively.
\end{proof}

For example, deductive interpolation for the multiplicative fragment of $\lgc{RM^t}$ can be established proof theoretically via a Maehara lemma argument for the sequent calculus for this fragment defined in~\cite{Avr87}. This yields a further proof of the fact that $\lgc{RM^t}$ has the deductive interpolation property, first proved in~\cite{Mey80} (see also~\cite{Avr86,MM12}). Indeed, since the variety of Sugihara algebras corresponding to $\lgc{RM^t}$ is locally finite, this logic is coherent and has the right uniform deductive interpolation property. Similarly, it can be shown semantically that the multiplicative fragment of $\lgc{A}$ has  the right uniform deductive interpolation property and hence that the same holds for the full logic (see~\cite{MMT14} for a proof that proceeds along these lines without mentioning a theorem of alternatives).

%%%%%%%%%%%%%%%%%%%%%%%%%%%%%%%%%%%%%%%%%%%%%%%%%%%%%%%%%%%%%%%

\section{Density}\label{sec:density}

Recall from Section~\ref{sec:prelim} that any axiomatic extension $\lgc{L}$ of $\lgc{IUL^-}$ is complete both with respect to a variety $\vty{V}_\lgc{L}$ of involutive commutative residuated lattices and to the class $\vty{V}^c_\lgc{L}$ of totally ordered members of $\vty{V}_\lgc{L}$. In this section we show that if $\lgc{L}$ has a theorem of alternatives and $\der{\lgc{L}} \ut \to \nf$, then $\lgc{L}$ is also complete with respect to the class $\vty{V}^d_\lgc{L}$ of {\em dense} totally ordered members of $\vty{V}_\lgc{L}$. From an algebraic perspective, such a completeness result corresponds to  $\vty{V}_\lgc{L}$ being generated as a generalized quasivariety by the class $\vty{V}^d_\lgc{L}$ (i.e., $\vty{V}_\lgc{L} = \op{ISP}(\vty{V}^d_\lgc{L})$) or, equivalently, the property that each member of  $\vty{V}^c_\lgc{L}$ embeds into a member of  $\vty{V}^d_\lgc{L}$  (see~\cite{MT17} for details).

Let us say that  an extension $\lgc{L}$ of $\lgc{IUL^-}$ is {\em dense chain complete} if for any $\Si \cup \{\f\}\subseteq \fm_\ell$,
\[
\Si\der{\lgc{L}}\f \iff \Si \mdl{\vty{V}^d_\lgc{L}} \f.
\]
If $\lgc{L}$ has a theorem of alternatives and is dense chain complete, then $\der{\lgc{L}} \ut \to \nf$. Just consider any $\m{A}\in\vty{V}^d_\lgc{L}$ and observe that $\der{\lgc{L}} (\ut \to x) \lor (x \to \nf)$ by part (i) of Lemma~\ref{lem:useful}, so $\ut \le x$ or $x \le \nf$ for all $x \in A$, and, since $\m{A}$ is dense, $\ut = \nf$ and, by dense chain completeness,  $\der{\lgc{L}} \ut \to \nf$. It follows, for example, that $\lgc{RM^t}$ is not dense chain complete, although, as shown below (or see~\cite{MM07}), $\lgc{IUML^-} = \lgc{RM^t} \oplus \ut \to \nf$ does have this property.

One method for establishing dense chain completeness for a logic is to establish the admissibility of a certain ``density rule" (see~\cite{MM07,CM08}). We say that an extension $\lgc{L}$ of $\lgc{IUL^-}$ has the {\em density property} if for any $\Si \cup \{\f,\ps,\x\}\subseteq \fm_\ell$ and $p\not\in{\rm Var}(\Si \cup \{\f,\ps,\x\})$,
\[
\Si \der{\lgc{L}} (\f \to p) \lor (p \to \ps) \lor \x \iff \Si \der{\lgc{L}}    (\f \to \ps) \lor \x.
\]
We make use of the following result,  proved in a more general setting in~\cite{MM07} (see also~\cite{CM08,BT16,MT17,GH18}).

\begin{Theorem}[\cite{MM07}]\label{thm:densitycondition}
Any extension of $\lgc{IUL^-}$ that has the density property is dense chain complete.
\end{Theorem}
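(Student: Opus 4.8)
The statement splits into two inclusions between $\der{\lgc{L}}$ and $\mdl{\vty{V}^d_\lgc{L}}$. The soundness inclusion $\Si\der{\lgc{L}}\f \Rightarrow \Si\mdl{\vty{V}^d_\lgc{L}}\f$ is immediate: by Proposition~\ref{prop:completeness}, $\Si\der{\lgc{L}}\f$ gives $\Si\mdl{\vty{V}_\lgc{L}}\f$, and since $\vty{V}^d_\lgc{L}\subseteq\vty{V}_\lgc{L}$ we get $\Si\mdl{\vty{V}^d_\lgc{L}}\f$. The content is the converse, which I would prove contrapositively: assuming $\Si\not\der{\lgc{L}}\f$, I would build a \emph{dense} chain $\m{B}\in\vty{V}_\lgc{L}$ and a homomorphism $e\colon\fma_\ell\to\m{B}$ with $\ut\le e(\ps)$ for all $\ps\in\Si$ but $\ut\not\le e(\f)$, witnessing $\Si\not\mdl{\vty{V}^d_\lgc{L}}\f$. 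Since $\der{\lgc{L}}$ is finitary I may assume $\Si$ finite, and $\m{B}$ will be a Lindenbaum--Tarski quotient $\fma_\ell/{\sim}$ of a saturated theory $\Si^*$, where $\ps\sim\x$ iff $\Si^*$ derives $\ps\to\x$ and $\x\to\ps$, ordered by $[\ps]\le[\x] :\Longleftrightarrow \Si^*\der{\lgc{L}}\ps\to\x$.

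The plan is to construct $\Si^*$ as the union of an increasing chain of finite sets $\Si=\Delta_0\subseteq\Delta_1\subseteq\cdots$, each maintaining the invariant $\Delta_k\not\der{\lgc{L}}\f$, so that $\Si^*$ is deductively closed (by finitariness), \emph{linear}, and \emph{dense}. For linearity I enumerate all pairs $(\ps,\x)$ and at the appropriate stage add $\ps\to\x$ or $\x\to\ps$ to the current $\Delta_k$. This preserves the invariant: since $(\ps\to\x)\lor(\x\to\ps)$ is a theorem of $\lgc{IUL^-}$ (an instance of prelinearity, using $y\land\ut\le y$), the proof-by-cases equivalence of Section~\ref{sec:prelim} shows that if both extensions derived $\f$ then so would $\Delta_k$. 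Because $\Si^*$ is a deductively closed theory of the axiomatic extension $\lgc{L}=\lgc{IUL^-}\oplus\mathcal{A}$, the quotient lies in $\InCRL$ and satisfies $\mathcal{A}$, hence lies in $\vty{V}_\lgc{L}$; it is totally ordered by linearity. Finally $e(\x)=[\x]$ gives $[\ut]\le[\ps]$ exactly when $\ps\in\Si^*$ (as $\ut\to\ps$ and $\ps$ are interderivable), so $e$ maps $\Si$ into the positive cone and refutes $\f$.

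It remains to force density, and this is where the hypothesis enters and where the difficulty lies. Whenever a pair $(\ps,\x)$ yields a strict gap, i.e. $\Delta_k\der{\lgc{L}}\ps\to\x$ but $\Delta_k\not\der{\lgc{L}}\x\to\ps$, I introduce a fresh variable $p\notin{\rm Var}(\Delta_k\cup\{\f\})$ and add $\ps\to p$ and $p\to\x$, so that $[\ps]\le[p]\le[\x]$, while keeping $p$ strictly inside the gap, i.e. preserving $\Si^*\not\der{\lgc{L}} p\to\ps$ and $\Si^*\not\der{\lgc{L}}\x\to p$. The density property is precisely what guarantees the insertion is consistent: semantically it asserts that a gap between $[\ps]$ and $[\x]$ (with a side formula $\theta$ false) can be refined to a point strictly inside, since $\Delta_k\not\der{\lgc{L}}(\x\to\ps)\lor\theta$ is equivalent to $\Delta_k\not\der{\lgc{L}}(\x\to p)\lor(p\to\ps)\lor\theta$; a single model realizing the latter non-derivability simultaneously places $p$ strictly between $\ps$ and $\x$ and falsifies $\theta$, i.e. witnesses all of the required positive and negative conditions at once. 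I would exploit this by translating the membership statement ``$\Delta_k\cup\{\ps\to p,p\to\x\}\der{\lgc{L}}\f$'' into derivability from $\Delta_k$ of a disjunction of the form $(\x\to p)\lor(p\to\ps)\lor\theta$ — using the residuation/deduction machinery of extensions of $\lgc{IUL^-}$ and the reduction of consequence to derivable disjunctions recalled in Section~\ref{sec:prelim} — and then applying the density property to eliminate $p$, contradicting $\Delta_k\not\der{\lgc{L}}\f$.

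The main obstacle, and the step I expect to demand the most care, is exactly this translation together with the bookkeeping: matching the ``hypothesis form'' in which gap-filling is naturally phrased to the disjunctive $(\f'\to p)\lor(p\to\ps')\lor\x$ form in which the density property is stated, and arranging the countably many positive and negative requirements so that each inserted point is never later collapsed onto an endpoint by a subsequent linearity choice. The latter is handled by dedicating a fresh variable to each gap (so that the linearity requirement for any pair involving an inserted point is already discharged by the implication added at its insertion) and by a substitution argument on the fresh variable showing no derivation can force the forbidden reverse implications. Granting this, $\m{B}=\fma_\ell/{\sim}$ is a dense totally ordered member of $\vty{V}_\lgc{L}$ on which $e$ refutes $\Si\der{\lgc{L}}\f$, which completes the contrapositive and hence, together with the trivial soundness direction, the theorem.
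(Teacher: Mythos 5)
First, a point of reference: the paper contains no proof of this theorem at all --- it is imported verbatim from \cite{MM07} (``proved in a more general setting''). Your reconstruction follows the right overall strategy, which is indeed that of the cited proof: a Lindenbaum-style saturation producing a linear, dense theory whose quotient is a dense chain in $\vty{V}_\lgc{L}$ refuting the consequence. But two of the mechanisms you rely on at the crucial density step would fail. First, your insertion is triggered by ``$\Delta_k\der{\lgc{L}}\ps\to\x$ and $\Delta_k\not\der{\lgc{L}}\x\to\ps$'' and is supposed to be vindicated by a contradiction with $\Delta_k\not\der{\lgc{L}}\f$. But after your translation and an application of the density property you only obtain $\Delta_k\der{\lgc{L}}(\x\to\ps)\lor\f$, and since $\Delta_k$ is not a linear theory, disjunctions are not prime: this is perfectly compatible with both $\Delta_k\not\der{\lgc{L}}\x\to\ps$ and $\Delta_k\not\der{\lgc{L}}\f$, so no contradiction materializes. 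The trigger must be the stronger condition $\Delta_k\cup\{\x\to\ps\}\der{\lgc{L}}\f$ (closing the gap is \emph{inconsistent} with the invariant); only then does proof-by-cases turn $\Delta_k\der{\lgc{L}}(\x\to\ps)\lor\f$ into $\Delta_k\der{\lgc{L}}\f$. Second, your devices for ensuring an inserted witness is ``never later collapsed'' do not work: inserting $p$ settles linearity only for the pairs $(p,\ps)$ and $(p,\x)$, not for pairs $(p,\chi)$ with arbitrary $\chi$, and a later linearity step adding $p\to\chi$ where $\Si^*\der{\lgc{L}}\chi\to\ps$ collapses $p$ onto $\ps$ by transitivity; moreover the substitution argument is sound only at the insertion stage, since once later stages add formulas containing $p$ (which they must, to make the theory linear), substituting for $p$ no longer maps the theory into itself.

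The missing idea, which repairs both defects at once, is to build the negative requirements permanently into the invariant: maintain $\Delta_k\not\der{\lgc{L}}\f\lor D_k$, where $D_k$ is the disjunction of all forbidden implications $p\to\ps$ and $\x\to p$ created by earlier insertions; perform each linearity step relative to this invariant (proof-by-cases guarantees one of the two implications can be added), and insert a fresh witness exactly when adding $\x\to\ps$ would violate it, simultaneously extending $D$. Preservation of the strengthened invariant under insertion is then exactly your density-property computation with side formula $\theta:=\f\lor D_k$, combined with one bridging lemma that you gesture at but never supply: if $\Delta\cup\{\ps\to p,\,p\to\x\}\der{\lgc{L}}\gamma$ with $p$ fresh, then $\Delta\der{\lgc{L}}\gamma\lor(p\to\ps)\lor(\x\to p)$. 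This lemma is where the hypotheses containing $p$ get discharged into disjuncts, and it follows from the completeness of $\lgc{L}$ with respect to $\vty{V}^c_\lgc{L}$ (Section~\ref{sec:prelim}) by a two-case argument in an arbitrary chain: either $\ps\le p\le\x$, so the hypothesis applies, or else $p\le\ps$ or $\x\le p$ holds and the added disjuncts are true outright. With this invariant in place, any later derivation of a forbidden implication from $\Si^*$ would yield $\f\lor D_k$, which finitarity excludes at every stage; so witnesses stay strictly inside their gaps, the quotient is a dense chain in $\vty{V}_\lgc{L}$, and the remainder of your outline (linearity, membership in $\vty{V}_\lgc{L}$, the evaluation refuting $\f$) goes through as you describe.
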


\begin{Theorem}
Any extension of $\lgc{IUL^-}\oplus\{\ut\to\nf\}$ that admits a theorem of alternatives is dense chain complete.
\end{Theorem}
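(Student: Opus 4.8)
The plan is to derive the theorem from Theorem~\ref{thm:densitycondition} by showing that any extension $\lgc{L}$ of $\lgc{IUL^-}\oplus\{\ut\to\nf\}$ that admits a theorem of alternatives has the density property. Throughout I would exploit the consequences of \emph{oddness}: since $\lgc{L}$ admits a theorem of alternatives, Lemma~\ref{lem:useful}(ii) gives $\der{L}\nf\to\ut$, so together with the standing assumption $\der{L}\ut\to\nf$ the two constants are interderivable. Hence in every chain $\m{A}\in\vty{V}^c_\lgc{L}$ we have $\ut=\nf$, and therefore $\lnot\ut=\ut\to\nf=\ut$, the element $\nf$ (equivalently $\ut$) is neutral for $+$, each $n\ut$ is interderivable with $\ut$, and $\ut\to\ps$ is interderivable with $\ps$. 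I would record one further fact that these identities buy us and that fails in general (e.g.\ for $\lgc{RM^t}$, which has a theorem of alternatives but is not dense chain complete): \emph{if $\Si\der{L}\a$ and $\Si\der{L}\ps$, then $\Si\der{L}\a+\ps$}. Indeed, in any chain with a $\Si$-satisfying evaluation $e$, from $\ut\le e(\a)$ we get $\lnot e(\a)\le\lnot\ut=\ut$, whence $e(\a+\ps)=\lnot e(\a)\to e(\ps)\ge\ut\to e(\ps)=e(\ps)\ge\ut$; completeness with respect to chains then yields $\Si\der{L}\a+\ps$.

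First I would reduce the left-to-right direction of the density property to a purely multiplicative statement. Using the equivalences of Section~\ref{sec:prelim}, distributivity, and the chain-valid residuation laws $(\bigwedge_i D_i)\to r\equiv\bigvee_i(D_i\to r)$ and $r\to(\bigvee_a C_a)\equiv\bigvee_a(r\to C_a)$, I would rewrite $\f\to p$ and $p\to\ps$ as disjunctions of conjunctions of formulas $\mu\to p$ and $p\to\nu$ with $\mu,\nu\in\fm_m$, then push conjunctions outward via distributivity and split the resulting conjunctive consequences. This reduces matters to showing, for $\Si\cup\{\mu_1,\dots,\mu_s,\nu_1,\dots,\nu_t,\x_1,\dots,\x_k\}\subseteq\fm_m$ and $p$ occurring in none of them,
\[
\Si\der{L}\bigvee_i(\mu_i\to p)\lor\bigvee_a(p\to\nu_a)\lor\bigvee_l\x_l\enspace\Longrightarrow\enspace\Si\der{L}\bigvee_{i,a}(\mu_i\to\nu_a)\lor\bigvee_l\x_l .
\]
(The right-to-left direction of the density property holds already in every chain, so it is immediate by completeness.)

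Next I would translate implications by $\mu\to\nu\equiv\lnot\mu+\nu$, making the hypothesis a derivable disjunction of multiplicative formulas, and apply the theorem of alternatives to obtain naturals, not all $\nf$, with
\[
\Si\der{L}\sum_i c_i(\lnot\mu_i+p)+\sum_a d_a(\lnot p+\nu_a)+\sum_l e_l\x_l ,
\]
in which $p$ occurs as $Cp+D\lnot p$ with $C=\sum_i c_i$ and $D=\sum_a d_a$. By Lemma~\ref{lem:onedirection}, it suffices to manufacture a derivable (nontrivial) linear combination of the \emph{target} summands $\lnot\mu_i+\nu_a$ and $\x_l$. Since $p$ is fresh, substitution-invariance lets me replace $p$ by any multiplicative formula. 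The degenerate cases already close: if $C=0$ I substitute $p\mapsto\mu_{i_0}$, turning $\sum_a d_a(\lnot p+\nu_a)$ into $\sum_a d_a(\lnot\mu_{i_0}+\nu_a)$, a combination of targets; dually, if $D=0$ I substitute $p\mapsto\nu_{a_0}$. In each case Lemma~\ref{lem:onedirection} yields a sub-disjunction of the desired conclusion, which suffices by monotonicity of $\lor$.

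The step I expect to be the main obstacle is the case $C,D\ge1$. Substituting $p\mapsto\ut$ collapses $Cp+D\lnot p$ to a neutral element by oddness, leaving $\Si\der{L}\sum_i c_i\lnot\mu_i+\sum_a d_a\nu_a+\sum_l e_l\x_l$; but to express this as $\sum_{i,a}\gamma_{ia}(\lnot\mu_i+\nu_a)$ one needs a nonnegative integer matrix with row sums $c_i$ and column sums $d_a$, which exists precisely when $C=D$. Thus the crux is to show that the net multiplicity of the fresh variable may be \emph{balanced}. Substituting $p$ by a constant is lossy (it discards exactly the information distinguishing $C$ from $D$), so the balancing must use that the derived combination holds for \emph{every} value of $p$: I would approach it by choosing the substitution for $p$ adaptively — e.g.\ $p\mapsto\nu_{a_0}$, which converts the $p$-copies into target summands $\lnot\mu_i+\nu_{a_0}$ while leaving a residue $\sum_a d_a(\lnot\nu_{a_0}+\nu_a)$ — and then recombining, using the closure fact $\Si\der{L}\a,\;\Si\der{L}\ps\Rightarrow\Si\der{L}\a+\ps$ together with the oddness identities, to absorb or neutralise the residue into a genuine combination of the targets. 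Verifying that this balancing always goes through (equivalently, that one may assume $C=D$) is the delicate point on which the whole argument, and the indispensability of $\der{L}\ut\to\nf$, turns; once it is secured, Lemma~\ref{lem:onedirection} delivers the conclusion and Theorem~\ref{thm:densitycondition} completes the proof.
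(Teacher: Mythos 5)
You have the right skeleton --- reduce to Theorem~\ref{thm:densitycondition} via the density property, apply the theorem of alternatives to the hypothesis, dispose of degenerate coefficient patterns by substituting the fresh variable $p$ --- and your degenerate cases ($C=0$ or $D=0$) are handled correctly. But the case $C,D\ge 1$ is the entire mathematical content of the theorem, and your proposal leaves it open: you say yourself that verifying the balancing ``is the delicate point on which the whole argument turns; once it is secured\dots''. Moreover, the plan you sketch for securing it cannot work as stated. Substituting $p\mapsto\nu_{a_0}$ leaves the residue $\sum_a d_a(\lnot\nu_{a_0}+\nu_a)$, whose summands with $a\neq a_0$ are neither target summands nor theorems; and even the derivable summands ($a=a_0$) cannot be removed, because these logics admit no cancellation: from $\Si\der{\lgc{L}}\a+\ps$ and $\der{\lgc{L}}\ps$ one cannot infer $\Si\der{\lgc{L}}\a$ (this consequence already fails in odd Sugihara chains), so ``absorbing'' a residue out of a $+$-combination is not an available move. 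More generally, your own matrix observation shows that purely additive recombination of substitution instances cannot create balance, since every instance inherits the same unbalanced multiplicities $c_i$, $d_a$.

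The missing idea --- the actual crux of the paper's proof --- is that the balancing is done \emph{multiplicatively}, using a second, global substitution. Besides $p\mapsto\nf$, which gives $\Si\der{\lgc{L}}\la(\lnot\f)+\mu\ps+\ga\x$, the paper substitutes all variables \emph{other than} $p$ by $\nf$: since $\ut\to\nf$ and $\nf\to\ut$ are both derivable, every constant formula collapses to $\ut$, the premises become theorems and can be cut away, and what survives is the pure ``balance certificate'' $\der{\lgc{L}}\la p+\mu(\lnot p)$. (This collapse is exactly where $\ut\to\nf$ is indispensable and where $\lgc{RM^t}$ breaks down.) Instantiating the certificate at $p\mapsto\f^\la$ gives $\der{\lgc{L}}\f^{\la\mu}\to\la(\f^\la)$; raising the first instance to its $\la$-th fusion power (a fusion of theorems is a theorem) and applying linear distribution $(\a+\ps)\cdot\x\to\a+(\ps\cdot\x)$ gives $\Si\der{\lgc{L}}\la(\f^\la)\to(\la\mu\ps+\la\ga\x)$; transitivity then yields $\Si\der{\lgc{L}}\la\mu(\f\to\ps)+\la\ga\x$, which is balanced because the coefficient of $\f\to\ps$ is the \emph{product} $\la\mu$, and Lemma~\ref{lem:onedirection} concludes. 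Two smaller points. First, the paper never needs your multi-pair statement with several $\mu_i$ and $\nu_a$: for non-multiplicative $\f,\ps,\x$ it adds fresh variables $q,r,s$ and premises $q\to\f$, $\ps\to r$, $\x\to s$, applies the single-pair multiplicative case, and then substitutes $q\mapsto\f$, $r\mapsto\ps$, $s\mapsto\x$ to discharge the extra premises; this keeps the combinatorics trivial. Second, your parenthetical counterexample is wrong: the closure fact $\Si\der{\lgc{L}}\a$ and $\Si\der{\lgc{L}}\ps$ imply $\Si\der{\lgc{L}}\a+\ps$ \emph{does} hold for $\lgc{RM^t}$, since your semantic argument only needs $\lnot\ut\le\ut$, i.e.\ $\nf\to\ut$, which Lemma~\ref{lem:useful}(ii) supplies for every logic with a theorem of alternatives.
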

\begin{proof}
Let $\lgc{L}$ be an extension of $\lgc{IUL^-}\oplus\{\ut\to\nf\}$ that admits a theorem of alternatives. By Theorem~\ref{thm:densitycondition}, it suffices to prove that $\lgc{L}$ has the density property. Suppose first that for some $\Si \cup \{\f,\ps,\x\}\subseteq \fm_m$ and $p\not\in{\rm Var}(\Si \cup \{\f,\ps,\x\})$,
\[
\Si \der{\lgc{L}} (\f \to p) \lor (p \to \ps) \lor\x.
\]
Since  $\lgc{L}$ admits a theorem of alternatives, there exist $\la,\mu,\ga\in \N$ not all $0$ such that
\[
\Si \der{\lgc{L}} \la(\f \to p) + \mu(p \to \ps) + \ga \x.
\]
Substituting $p$ with $\nf$, and separately all other variables with $\nf$,  yields the consequences
\[
\Si \der{\lgc{L}}  \la (\lnot \f) + \mu \ps + \ga \x \enspace \text{and} \enspace
\der{\lgc{L}} \la p + \mu (\lnot p).
\]
Multiplying the conclusion in the first consequence by $\la$ and substituting $p$ with $\f^\la$ in the second consequence produces the consequences
\[
\Si \der{\lgc{L}}  \la(\f^\la) \to (\la\mu \ps + \la\ga \x) \enspace \text{and} \enspace
\der{\lgc{L}} \f^{\la\mu} \to \la (\f^\la).
\]
By transitivity of implication, we obtain
\[
\Si \der{\lgc{L}}   \f^{\la\mu} \to (\la\mu \ps + \la\ga \x),
\]
 which can be rewritten as
\[
\Si \der{\lgc{L}} \la\mu (\f \to \ps) + \la\ga \x.
\]
By the theorem of alternatives again, $\Si \der{\lgc{L}}  (\f \to \ps) \lor \x$.

Now consider the more general case where $\Si \der{\lgc{L}} (\f \to p) \lor (p \to \ps) \lor\x$ for some $\Si \cup \{\f,\ps,\x\}\subseteq \fm_\ell$ and $p\not\in{\rm Var}(\Si \cup \{\f,\ps,\x\})$. If $\f,\ps,\x \in \fm_m$, then using the equivalences presented in Section~\ref{sec:prelim} and the multiplicative case just established, we obtain again $\Si \der{\lgc{L}}  (\f \to \ps) \lor \x$ as required. Otherwise, for $q,r,s \not\in\{p\} \cup {\rm Var}(\Si \cup \{\f,\ps,\x\})$, we obtain
\[
\Si \cup \{q \to \f, \ps \to r, \x \to s\} \der{\lgc{L}} (q \to p) \lor (p \to r) \lor s.
\]
But then also using the equivalences presented in Section~\ref{sec:prelim} and the multiplicative case just established, 
\[
\Si \cup \{q \to \f, \ps \to r, \x \to s\} \der{\lgc{L}} (q \to r) \lor s,
\]
and finally, substituting $\f$ for $q$, $\ps$ for $r$, and $\x$ for $s$ yields $\Si \der{\lgc{L}}  (\f \to \ps) \lor \x$.
\end{proof}

\begin{Corollary}
Let $\mathcal{A}\subseteq \fm_m$ and $\lgc{L} = \lgc{BIUL^-}\oplus \mathcal{A}$. Then $\lgc{L}$ is dense chain complete.
\end{Corollary}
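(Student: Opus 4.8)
The plan is to obtain this corollary as an immediate application of the theorem immediately preceding it, namely that any extension of $\lgc{IUL^-}\oplus\{\ut\to\nf\}$ admitting a theorem of alternatives is dense chain complete. Accordingly, for $\lgc{L} = \lgc{BIUL^-}\oplus\mathcal{A}$ it suffices to check two things: that $\lgc{L}$ admits a theorem of alternatives, and that $\lgc{L}$ is an extension of $\lgc{IUL^-}\oplus\{\ut\to\nf\}$.

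The first point I would simply read off from the corollary to Theorem~\ref{t:toa} established above, which asserts precisely that $\lgc{BIUL^-}\oplus\mathcal{A}$ admits a theorem of alternatives for every $\mathcal{A}\subseteq\fm_m$. For the second point, I would verify that $\der{\lgc{L}}\ut\to\nf$. Recalling that $\lgc{BIUL^-} = \lgc{IUL^-}\oplus\{n\f\to\f^n,\ \f^n\to n\f \mid n\in\N\}$ together with the conventions $0\f := \nf$ and $\f^0 := \ut$, the $n=0$ instance of the balanced axiom $\f^n\to n\f$ is literally $\ut\to\nf$. Hence $\ut\to\nf$ is an axiom instance of $\lgc{BIUL^-}$ and is therefore derivable in $\lgc{L}$, so that $\lgc{L}$ is indeed an extension of $\lgc{IUL^-}\oplus\{\ut\to\nf\}$.

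With both hypotheses of the preceding theorem verified, that theorem applies to $\lgc{L}$ and delivers the desired conclusion that $\lgc{L}$ is dense chain complete. I do not anticipate any genuine obstacle: the whole argument rests on the observation that the balanced axioms already encode $\ut\to\nf$ through their degenerate $n=0$ instance, after which the density theorem does all the remaining work.
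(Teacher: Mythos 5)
Your proposal is correct and matches the derivation the paper intends: the corollary is stated without proof precisely because it follows immediately from the corollary at the end of Section~\ref{sec:alternatives} (theorem of alternatives for $\lgc{BIUL^-}\oplus\mathcal{A}$) together with the preceding density theorem, once one observes that $\ut\to\nf$ is the degenerate $n=0$ instance $p^0\to 0p$ of the balanced axioms. Your identification of that instance is exactly what makes $\lgc{BIUL^-}\oplus\mathcal{A}$ an extension of $\lgc{IUL^-}\oplus\{\ut\to\nf\}$, so nothing is missing.
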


Let us remark finally that dense chain completeness can be established for $\lgc{A}$ and $\lgc{IUML^-}$ via a direct semantic argument or proof-theoretically using an analytic hypersequent calculus as described in~\cite{MM07}, but these methods do not seem to be available for $\lgc{BIUL^-}$ or other logics admitting a theorem of alternatives. It may be hoped also that this new approach provides a first step towards addressing the open standard completeness problem for involutive uninorm logic posed in~\cite{MM07}, possibly by introducing a weaker theorem of alternatives property.\footnote{Note that a very different approach to tackling this problem, via representations of totally ordered involutive commutative residuated lattices using ordered groups, has been described recently in~\cite{Jen18}.}

\section*{Acknowledgements}
\noindent The research reported here was funded by the Swiss National Science Foundation (SNF) grant 200021$\_$184693 and the EU Horizon 2020 research and innovation programme under the Marie Sk{\l}odowska-Curie grant agreement No 689176.

\vskip1cm

\end{document}